\newcommand{\E}{{\mathbb E}}
\newcommand{\Z}{{\mathbb Z}}
\newcommand{\R}{{\mathbb R}}
\newcommand{\Cc}{\mathcal{C}}
\newcommand{\F}{\mathcal{F}}
\newcommand{\Nn}{\mathcal{N}}
\newcommand{\Hh}{\mathcal{H}}
\newcommand{\G}{\mathscr{G}}
\newcommand{\g}{{\, |\,}}
\newcommand{\Oo}{\mathcal{O}}
\newtheorem {theorem}{Theorem}
\newtheorem {definition}[theorem]{Definition}
\newtheorem {lemma}[theorem]{Lemma}
\newtheorem {proposition}[theorem]{Proposition}
\newtheorem{condition}[theorem]{Condition}
\def\tilde{\widetilde}
\def\hat{\widehat}
\def\N{\mathbb{N}}
\def\R{\mathbb{R}}
\def\1{\mathbbm{1}}
\title{Consistency of Bayesian inference with Gaussian process priors for a parabolic inverse problem}
\author[]{Hanne Kekkonen \thanks{Date: \today,\ Email: h.n.kekkonen@tudelft.nl}}
\affil[]{Delft Institute of Applied Mathematics, TU Delft, The Netherlands}
\date{}
\begin{document}
\maketitle

\begin{abstract}
We consider the statistical nonlinear inverse problem of recovering the absorption term $f>0$ in the heat equation 
\begin{align*}
\begin{cases}
\partial_tu-\frac{1}{2}\Delta u+fu=0 \quad &\text{on $\Oo\times(0,\textbf{T})$}\\
u = g & \text{on $\partial\Oo\times(0,\textbf{T})$}\\
u(\cdot,0)=u_0 & \text{on $\Oo$}, 
\end{cases} 
\end{align*}
where $\Oo\in\R^d$ is a bounded domain, $\textbf{T}<\infty$ is a fixed time, and $g,u_0$ are given sufficiently smooth functions describing boundary and initial values respectively. The data consists of $N$ discrete noisy point evaluations of the solution $u_f$ on $\Oo\times(0,\textbf{T})$. We study the statistical performance of Bayesian nonparametric procedures based on a large class of Gaussian process priors. We show that, as the number of measurements increases, the resulting posterior distributions concentrate around the true parameter generating the data, and derive a convergence rate for the reconstruction error of the associated posterior means. We also consider the optimality of the contraction rates and prove a lower bound for the minimax convergence rate for inferring $f$ from the data, and show that optimal rates can be achieved with truncated  Gaussian priors. 

\end{abstract}

  
\tableofcontents

\section{Introduction}\label{Sec:Introduction}   
  
Inverse problems arise from the need to extract information from indirect and noisy measurements. In many scientific disciplines, such as imaging, medicine, material sciences and engineering, the relationship between the quantity of interest and the collected data is determined by the physics of the underlying system and can be modelled mathematically. In general, we are interested in recovering some function $f$ from measurements of $G(f)$, where $G$ is the forward operator of some partial differential equation (PDE). In practice, a statistical observation scheme provides us data  
\begin{align}\label{eq:Measuremet}
Y_i=G(f)(Z_i)+\sigma W_i, \quad i=1,\cdots,N,  
\end{align}
where the $Z_i$'s are points at which the PDE solution $G(f)$ is measured, and the
$W_i$'s are standard Gaussian noise variables scaled by a fixed noise level $\sigma>0$. The inverse problem then consists of reconstructing $f$ from the noisy measurements $(Y_i,Z_i)_{i=1}^N$. Many, possibly nonlinear, inverse problems fit into this framework, including electrical impedance tomography \cite{Calderon1980, Isaacson2004}, photoacoustic tomography and several other hybrid imaging problems \cite{Bal2010, Bal2012, Kuchment2015}, and inverse scattering \cite{Colton1998, Hohage2015}. Even though inverse problems have been studied in great detail, see e.g. \cite{Benning2018, Engl1996a, Kaltenbacher2008}, statistical noise models as the one above have been analysed only more recently \cite{Arridge2019, Bissantz2007, Kaipio2004a}.  

In many applications the forward operator $G$ arising from the related PDE is non-linear in $f$, and so the negative log-likelihood function arising from the measurement \eqref{eq:Measuremet} can be non-convex. This means that many commonly used methods like Tikhonov regularisation and maximum  a priori (MAP) estimation, where one has to minimise a penalised log-likelihood function, can not  be reliably computed by standard convex optimisation techniques. There are some iterative optimisation methods, such as Landweber iteration and Levenberg-Marquardt   regularisation, that circumvent the problems arising from non-convexity \cite{Bachmayr2009, Benning2017, Kaltenbacher2008} but the Bayesian approach offers an attractive alternative for solving complex inverse problems, see e.g. \cite{Dashti2017, Stuart2010}. In the standard Bayesian approach one assigns a Gaussian prior $\Pi$ to $f$, which is then updated, given data $(Y_i,Z_i)_{i=1}^N$, into a posterior distribution for $f$, using Bayes' theorem. The posterior distribution can be used to calculate point estimates but it also delivers an estimate of the statistical uncertainty in the reconstruction. If the forward map can be evaluated numerically one can deploy modern MCMC methods, such as stochastic gradient MCMC and parallel tempering, to construct computationally efficient Bayesian algorithms even for complicated non-linear inverse problems \cite{Beskos2017, Dosso2012, Ma2015, Latz2020}, hence avoiding optimisation algorithms and inversion of $G$. 
Computational guarantees for the mixing times of such algorithms are also available  even in general high-dimensional non-linear settings \cite{Hairer2014, Nickl2020b}.

Since there is no objective way to select a prior distribution it is natural to ask how the choice of the prior affects the solution, and especially if the conclusions are asymptotically independent of the prior. Another important question that arises is whether Bayesian inference provides a statistically optimal estimate of the unknown quantity $f$. If we assume that the data are generated from a fixed 'true' function $f=f_0$, we would like to know whether the posterior mean $\overline{f}=\E^\Pi(f\g (Y_i,Z_i)_{i=1}^N)$ converges towards the ground truth,  and at what speed the posterior contracts around $f_0$. 
Nonparametric Bayesian inverse problems have been extensively studied in linear settings and the statistical validity of Bayesian inversion methods is quite well understood, see e.g. \cite{Agapiou2013, Giordano2020a, Kekkonen2016, Knapik2011, Knapik2018, Monard2019, Ray2013, Szabo2015}.  

However, non-linear inverse problems are fundamentally more challenging and very little is known about the frequentist performance of Bayesian methods. Nonparametric Bayesian posterior contraction rates for discretely observed scalar diffusions were considered in \cite{Nickl2017}, and Bernstein-von Mises theorems for time-independent Schr\"odinger equation and compound Poisson processes were studied in \cite{Nickl2020a} and \cite{Nickl2019} respectively. The frequentist consistency of Bayesian inversion in the elliptic PDE in divergence form was examined in \cite{Vollmer2013}. All the above papers employ `uniform wavelet type priors' with bounded $C^\beta$-norms. 
The consistency of the Bayesian approach, with Gaussian process
priors, in the nonlinear inverse problem of reconstructing the diffusion coefficient from noisy observations of the solution to an elliptic PDE in divergence form, has been studied in \cite{Giordano2020b}.   
Notably, building on the ideas from \cite{Monard2021} where the consistency of Baysian inversion of noisy non-Abelian X-Ray ransform is considered, \cite{Giordano2020b} also provides contraction results for general non-linear inverse problems that fulfil certain Lipschitz and stability conditions.  A general class of non-linear inverse regression models, satisfying particular analytic conditions on the model including invertibility of the related Fisher information operator, has been considered in the recent paper \cite{Monard2020}, where a general semi-parametric Bernstein-von Mises theorem is proved.   

Closely related to \cite{Giordano2020b} are the results achieved in \cite{Nickl2020} for MAP estimates associated to Gaussian precess priors, but since the proofs are based on variational methods they are very different from the Bayesian ones, and due to the non-convexity of the forward operator the MAP estimates considered can be  difficult to compute. We also mention the recent results on statistical Cald\'eron problem \cite{Abraham2020}, where a logarithmic contraction speed is proved for the problem of recovering an unknown conductivity function from noisy measurements of the voltage to current map, also known as the Dirichlet-to-Neumann map, at the boundary of the medium.

In this paper we consider the problem of recovering a coefficient of a parabolic partial differential operator from observations of a solution to the associated PDE, under given boundary and initial value conditions, corrupted by additive Gaussian noise. More precisely, we will study the heat equation with an additional absorption or cooling term that presents all the conceptual difficulties of a time dependent parabolic PDEs but allows a clean exposition; Let $g$ and $u_0$ be sufficiently smooth boundary and initial value functions respectively, and let $f:\Oo\to\R$ be an unknown absorption term determining the solutions $u_f$ of the PDE  

\begin{align}\label{eq:SchrodingerHomogeneous}
\begin{cases}
L_f(u)=\partial_tu-\frac{1}{2}\Delta_x u+fu=0 \quad &\text{on $\Oo\times(0,\textbf{T})$}\\
u = g & \text{on $\partial\Oo\times(0,\textbf{T})$}\\
u(\cdot,0)=u_0 & \text{on $\Oo$,}
\end{cases} 
\end{align}
where $\Delta_x=\sum_{i=1}^d\partial^2/\partial x_i^2$ denotes the standard Laplace operator and $\partial_t$ the time derivative. Under mild regularity conditions on $f$, assuming that $f>0$, and $g,u_0$ satisfying natural consistency conditions on  $\partial\Oo\times\{0\}$ the theory of parabolic PDEs implies that \eqref{eq:SchrodingerHomogeneous} has a unique classical solution $G(f)=u_f\in C(\overline{Q})\cap C^{2,1}(Q)$. 
The above type reaction-diffusion equations can also be used to describe ecological systems like population dynamics, with $u$ being density of prey and $f$ describing resources or the effect of predators \cite{Wu2012}, evolution of competing languages\cite{Patriarca2004, Sole2010}, and many other spread phenomena. 
Another attractive way of modelling time evolution is using stochastic PDEs, see e.g.\cite{Altmeyer2019, Hildebrandt2019} and the references therein. However, these models are usually not feasible for Bayesian analysis due to complex likelihood functions.

We show that the posterior means arising from a large class of Gaussian process priors for $f$ provide statistically consistent recovery of the unknown function $f$ in \eqref{eq:SchrodingerHomogeneous} given data \eqref{eq:Measuremet}, and provide explicit polynomial convergence rates as the number of measurements increases.  We start with contraction results for posterior distributions arising from a wide class of rescaled Gaussian process priors, similar to those considered in \cite{Giordano2020b, Monard2021}, which address the need for additional a priori regularisation of the posterior distribution to overcome the effects of non-linearity of the forward map $G$. Building on the ideas from \cite{Monard2021} and further developed in \cite{Giordano2020b}, we first show that the the posterior distributions arising from these priors optimally solve the PDE-constrained regression problem of inferring $G(f)$ from the data \eqref{eq:Measuremet}. These results can then be combined with suitable stability estimates for the inverse of $G$ to show that the posterior distribution contracts around the true parameter $f_0$, that generated the data, at certain polynomial rate, and that the posterior mean converges to the truth with the same rate. We also consider the optimality of the contraction rates and prove lower bound for the minimax convergence rate for inferring $f_0$ from the data. We note that, while the rates achieved in the first part approach the optimal rate for very smooth models, they are not in general optimal. In the second part of the paper we show that optimal rates can be achieved with truncated and rescaled Gaussian priors. 

This paper is organised as follows. The basic setting of the statistical inverse problem and the notations used in the paper can be found in Section \ref{Sec:SetUp}. The main results are stated in Section \ref{Sec:Results} and their proofs are given in Section \ref{Sec:Proofs}.

\section{A statistical inverse problem for parabolic PDEs }\label{Sec:SetUp}

\subsection{Parabolic H\"older and Sobolev spaces}
Throughout this paper $\Oo\in \R^d$, $d\in\N$, is a given non-empty, open and bounded set with smooth boundary $\partial\Oo$ and closure $\overline{\Oo}$. We define the space-time cylinder $Q=\Oo\times(0,\textbf{T})$, with $\textbf{T}\in(0,\infty)$, and denote its lateral boundary $\partial\Oo\times(0,\textbf{T})$ by $\Sigma$. 

The spaces of continuous functions defined on $\Oo\subset\R^d$ and $\overline{\Oo}$ are denoted by $C(\Oo)$ and $C(\overline{\Oo})$  respectively, and endowed with the supremum norm $\|\cdot\|_\infty$. For positive integers $k\in\N$, $C^k(\Oo)$ is the space of $k$-times differentiable functions with uniformly continuous derivatives. For non-integer $s>0$ we define $C^s(\Oo)$ as
\begin{align*}
C^{s}(\mathcal{O})
=\left\{f \in C^{\lfloor s\rfloor}(\mathcal{O})
: \ \ \forall\  |\alpha|=\lfloor s\rfloor, 
\sup _{x, y \in \mathcal{O}, x \neq y} \frac{\left|D^{\alpha} f(x)-D^{\alpha} f(y)\right|}{|x-y|^{s-\lfloor s\rfloor}}<\infty\right\},
\end{align*}
where $\lfloor s\rfloor$ denotes the largest integer less than or equal to $s$, and for any multi-index $\alpha=(\alpha_1,\dots,\alpha_d)$, $D^\alpha$ is the $\alpha$th partial differential operator. The H\"older space $C^s(\Oo)$ is normed by
\begin{align*}
\|f\|_{C^{s}(\mathcal{O})}=\sum_{|\alpha| \leq\lfloor s\rfloor}\|D^{\alpha} f(x)\|_\infty+\sum_{|\alpha|=\lfloor s\rfloor} \sup _{x, y \in \mathcal{O}, x \neq y} \frac{\left|D^{\alpha} f(x)-D^{\alpha} f(y)\right|}{|x-y|^{s-\lfloor s\rfloor}},
\end{align*}
where the second sum is removed if $s$ is an integer. We denote by $C^\infty(\Oo)=\bigcap_s C^s(\Oo)$ the set of smooth functions. We also need H\"older-Zygmund spaces $\Cc^s(\Oo)$ which can be defined as a special case of Besov spaces by $\Cc^s(\Oo)=B^s_{\infty,\infty}(\Oo)$, $s\geq0$, see \cite[Section 3.4.2]{Triebel1983} for definitions. If $s\not\in\N$ then $\Cc^s(\Oo)=C^s(\Oo)$ and we have the continuous embeddings $\Cc^{s'}(\Oo)\subsetneq C^s(\Oo)\subsetneq\Cc^s(\Oo)$, for $s\in\N\cup\{0\}$, $s'>s$.  

The classical space to look for a solution to \eqref{eq:SchrodingerHomogeneous} is the parabolic H\"older space $C^{2,1}(Q)$ defined by
\begin{align*}
C^{2,1}(Q)=\left\{f \in C(Q): \exists\ \partial_{t} f, D_{i j} f \in C(Q), i, j=1, \ldots, d\right\}.
\end{align*}
Let $\theta\in(0,1]$ and define 
\begin{align*}
\|f\|_{C^{\theta,\theta/2}(Q)}=\|f\|_\infty+[f]_{\theta, \theta/ 2}, 
\quad [f]_{\theta, \theta/ 2}=\sup _{ z_1,z_2\in Q,z_{1} \neq z_{2}} \frac{\left|f\left(z_{1}\right)-f\left(z_{2}\right)\right|}{\rho\left(z_{1}, z_{2}\right)^{\theta}},
\end{align*}
where $\rho\left(z_{1}, z_{2}\right) $ is the parabolic distance between points $z_1=(x_1,t_1)\in\R^{d+1}$ and $z_2=(x_2,t_2)\in\R^{d+1}$ given by $\rho\left(z_{1}, z_{2}\right)=(\|x_{1}-x_{2}\|_2^2+|t_{1}-t_{2}|)^{1 / 2}$.
We denote by $C^{\theta,\theta/2}(Q)$ the space of all functions $f$ for which $\|f\|_{C^{\theta,\theta/2}}<\infty$. Finally, the parabolic H\"older space $C^{2+\theta,1+\theta/2}(Q)$, $\theta\in(0,1]$ is defined as the space of all functions $f$ for which 
\begin{align*}
\|f\|_{C^{2+\theta,1+\theta/2}(Q)} 
= \sum_{|\alpha|\leq 2} \|D^\alpha f(x,t)\|_\infty
+\|\partial_t f\|_\infty
+[f]_{2+\theta,1+\theta/2}<\infty,
\end{align*}
where
\begin{align*}
[f]_{2+\theta,1+\theta/2}
=\sum_{i, j=1}^{d}[D_{ij}f]_{\theta, \theta/ 2}
+ [\partial_t f]_{\theta, \theta/ 2}.
\end{align*}
Parabolic H\"older spaces  are Banach spaces. For further details see e.g. \cite{Amann2010, Friedman1964, Krylov1996}. 
Higher order parabolic H\"older spaces can be defined in a similar way. 
We will also need parabolic H\"older--Zygmund (Besov--H\"older) spaces $\Cc^{2+\theta,1+\theta/2}(Q)=B^{2+\theta,1+\theta/2}_{\infty,\infty}(Q)$, which possess similar properties to the isotropic H\"older-Zygmund spaces, see e.g. \cite[Chapter 7.2]{Amann1995}.

Denote by $L^2(\Oo)$ the Hilbert space of square integrable functions on $\Oo$, equipped with its usual inner product $\langle\cdot, \cdot\rangle_{L^{2}(\Oo)}$. For an integer $k\geq0$, the order-$k$ Sobolev space on $\Oo$ is the separable Hilbert space
\begin{align*}
H^{k}(\mathcal{O})=\left\{f \in L^{2}(\mathcal{O}): \forall|\alpha| \leq k, \exists D^{\alpha} f \in L^{2}(\mathcal{O})\right\},\\
\end{align*}
with the inner product $\langle f, g\rangle_{H^{k}(\mathcal{O})}=\sum_{|\alpha| \leq k}\left\langle D^{\alpha} f, D^{\alpha} g\right\rangle_{L^{2}(\mathcal{O})}$. 
For a non-integer $s\geq0$, $H^s(\Oo)$ can be defined by interpolation, see, e.g. \cite[Section 1.9.1]{Lions1972}. 

We will also use parabolic Sobolev spaces $H^{s,s/2}(Q)$, with $s\geq0$, defined by
\begin{align*}
H^{s,s/2}(Q) = L^2\big((0,\textbf{T}); H^{s}(\Oo)\big)\cap
H^{s/2}\big((0,\textbf{T}); L^2(\Oo)\big),
\end{align*}
which is a Hilbert space with a norm 
\begin{align*}
\|u\|_{H^{s,s/2}(Q)}^2=\int_0^\textbf{T}\|u(\cdot,t)\|_{H^{s}(\Oo)}^2dt+\|u\|^2_{H^{s/2}((0,\textbf{T}); L^2(\Oo))},
\end{align*}
see \cite[Section 4.2.1]{Lions1972b}.   

For $s>d/2$ the Sobolev embedding theorem implies that $H^s(\Oo)$ embeds continuously into $C^r(\Oo)$ for any $s>r+d/2\geq d/2$. Let $g\in H^{s,s/2}(Q)$ and $f\in H^{s}(\Oo)$. We then have 
\begin{align}\label{eq:SobolevBound1}
\|fg\|_{H^{s,s/2}(Q)}\leq c\|f\|_{H^{s}(\Oo)}\|g\|_{H^{s,s/2}(Q)}, \quad s>d/2.
\end{align}
We will also use the following bound for $g\in H^{s,s/2}(Q)$ and $f\in \mathcal{C}^{s,s/2}(Q)$
\begin{align}\label{eq:SobolevBound2}
\|fg\|_{H^{s,s/2}(Q)}\leq c\|f\|_{\mathcal{C}^{s,s/2}(Q)}\|g\|_{H^{s,s/2}(Q)}, \quad s\geq 0.
\end{align}
The above bounds follow from similar results in isotropic spaces (see e.g. \cite{Triebel1983}).

Whenever there is no risk of confusion, we will omit the reference to the underlying domain $\Oo$ or $Q$. Attaching a subscript $c$ to any space $X$ denotes the subspace $(X_c,\|\cdot\|_{X})$ consisting of functions with compact support in $\Oo$ (or $Q$). Also, if $K$ is a non-empty compact subset of $\Oo$, we denote by $H^s_K(\Oo)$ the closed subspace of functions in $H^s(\Oo)$ with support contained in $K$. The above definitions extends without difficulty to the case where $Q$ is
replaced by its lateral boundary $\Sigma$.

We use the symbols $\lesssim$ and $\gtrsim$ for inequalities holding up to a universal
constant. For a sequence of random variables $F_N$  
we write $F_N=O_{P}(a_N)$ if for all $\varepsilon>0$ there exists $M_\varepsilon<\infty$ such that $P(|F_N|\geq M_\varepsilon a_N)<\varepsilon$ for all $N$ large enough. Finally, we denote by $\mathcal{L(F)}$ the law of a random variable $F$.

\subsection{The measurement model and Bayesian approach}\label{SubSec:BayesApp}

Let the source function $g \in C^{2+\theta,1+\theta/2}(\overline{\Sigma})$ and the initial value function $u_0\in C^{2+\theta}(\overline{\Oo})$, with $\theta\in(0,1]$, equal to zero in some neighbourhood of $\partial\Oo\times\{0\}$.
Also, let $f\in C^\theta(\Oo)$ and $f\geq f_{min}>0$. Then the initial boundary value problem  \eqref{eq:SchrodingerHomogeneous} has a unique classical solution $u_f\in C(\overline{Q})\cap C^{2+\theta,1+\theta/2}(Q)$, see e.g. \cite{Lunardi2012,Krylov1996}, and a representation in terms of Feynman-Kac formula 
\begin{align}\label{eq:FeynmanKacHomogeneous}
\begin{split}
u(x,t)= & \E^x\left(u_0(X_t)\1_{\{\tau_t=t\}}\exp\left(-\int_0^tf(X_s)ds\right)\right)\\
& +\E^x\left(g(X_{\tau_\Oo},\tau_\Oo)\1_{\{\tau_t<t\}}\exp\left(-\int_0^{\tau_\Oo} f(X_s)ds\right)\right) \quad (x,t)\in Q.
\end{split}
\end{align}
Above $\1_A$ is the idicator function of a subset $A$, $(X_s:s\geq0)$ is a $d$-dimensional Brownian motion started at $x\in\Oo$, with the exit time $\tau_\Oo$ satisfying $\sup_{x\in\Oo}\E^x(\tau_\Oo)<\infty$, and $\tau_t=\min\{\tau_\Oo,t\}$, see e.g. \cite{Freidlin1985}.
The related inverse problem is to recover $f$ given $u_f$ (and $g,u_0$). If we additionally assume that $f$ is bounded, $g\geq g_{\min}>0$ and $u_0\geq u_{0,\min}>0$ we see, using \eqref{eq:FeynmanKacHomogeneous} and Jensen's inequality, that $u_f>0$. Hence,  given $u_f$ we can simply write $f=\frac{(\frac{1}{2}\Delta_x-\partial_t)u_f}{u_f}$. The more practical question we are interested in, is how to optimally solve the above non-linear inverse problem when the observations are corrupted by statistical noise.

We consider the following parameter space for $f$: For an integer $\alpha >2+d/2$, $f_{min}>0$, and $n=n(x)$ being the outward pointing normal at $x\in\partial\Oo$, let 
\begin{align}\label{eq:ParameterSpace}
\F_{\alpha,f_{min}}=
\left\{f \in H^{\alpha}(\mathcal{O}): \inf _{x \in \mathcal{O}} f(x)>f_{\min }, \
f_{\mid \partial \mathcal{O}}=1, \
\frac{\partial^{j} f}{\partial n^{j}}_{\mid \partial \mathcal{O}}\!\!=0 \text { for } 1 \leq j \leq \alpha -1\right\}.
\end{align}

Let $f\in\F_{\alpha,f_{\min}}$ and denote by $G(f)$ the solution to \eqref{eq:SchrodingerHomogeneous}. The measurement model we consider is
\begin{align}\label{eq:Observationf}
Y_i=G(f)(Z_i)+\sigma W_i, \quad W_i\sim\Nn(0,1)\quad  i=1,\dots,N,
\end{align}
where the noise amplitude $\sigma$ is considered to be a known constant, and the design points $Z_i=(X_i,T_i)$ are drawn from a uniform distribution on $Q$. That is, for $N\in\N$,
\begin{align*}
(Z_i)_{i=1}^N\sim\mu, \quad \mu=dz/\text{vol}(Q), 
\end{align*}
with $dz$ being the Lebesgue measure and $\text{vol}(Q)=\int_Q d(x,t)$. We assume that both space and time variable follow uniform distribution to unify the the following approach. The results could also be developed for deterministic time design at the expense of introducing further technicalities.   

We will take the Bayesian approach to the inverse problem of inferring $f$ from the noisy measuremets $(Y_i,Z_i)_{i=1}^N$, and place a priori measure on the unknown parameter $f$.  Gaussian process priors are a natural choice but they are supported in linear spaces (e.g., $H^\alpha_c(\Oo)$), which is why we next define a convenient bijective re-parametrisation for $f\in\F_{\alpha,f_{\min}}$. We follow the approach of using regular link functions as in \cite{Giordano2020b,Nickl2020}.  
\begin{definition}\label{Def:LinkFunction}
1. A function $\Phi$ is called a link function if it is a smooth, strictly increasing bijective map $\Phi:\R\to (f_{min},\infty)$ satisfying $\Phi(0)=1$ and $\Phi'(t)>0$ for all $t\in\R$. \newline
2. A function $\Phi:(a,b)\to\R$:, $\infty\leq a,b\leq\infty$, is called regular if all the derivatives of $\Phi$ are bounded on $\R$. 
\end{definition}

Note that given any link function $\Phi$, one can show (see \cite[Section 3.1]{Nickl2020}) that the parameter space $\F_{\alpha,f_{min}}$ in \eqref{eq:ParameterSpace} can be written as
\begin{align*}
\mathcal{F}_{\alpha, f_{m i n}}=\left\{\Phi \circ F: F \in H_{c}^{\alpha}(\mathcal{O})\right\}. 
\end{align*}
We can then consider the solution map associated to \eqref{eq:SchrodingerHomogeneous} as one defined on $H^\alpha_c(\Oo)$;
\begin{align}\label{eq:SolutioMap2}
\G: H_c^\alpha(\Oo) \to L^{2}(\Oo), \quad F \mapsto \G(F):=G(\Phi \circ F),
\end{align}
where $G(\Phi\circ F)=G(f)$ is the solution to \eqref{eq:SchrodingerHomogeneous} with $f=\Phi\circ F\in\F_{\alpha,f_{min}}$. 

Using the above re-parametrisation $f=\Phi\circ F$ with a given link function the observation scheme \eqref{eq:Observationf} can be rewritten as
\begin{align}\label{eq:DataModel2}
Y_i=\G(F)(Z_i)+\sigma W_i, \quad i=1,\cdots,N. 
\end{align}
The random vectors $(Y_i,Z_i)$ on $\R\times Q$ are then i.i.d. with laws denoted by $P^i_F$. It follows that $P^i_F$ has the Radon--Nikodym density
\begin{align}\label{eq:Density}
p_{F}(y, z):=\frac{d P_{F}^{i}}{d y \times d \mu}(y, z)=\frac{1}{\sqrt{2 \pi \sigma^{2}}}\ e^{-\frac{(y-\mathscr{G}(F)(z))^{2}}{2\sigma^2}}, \quad y \in \mathbb{R},\ z \in Q,
\end{align}
where $dy$ denotes the Lebesgue measure on $\R$. We write $P^N_F=\otimes_{i=1}^N P^i_F$ for the joint law of $(Y_i,Z_i)_{i=1}^N$ on $\R^N\times Q^N$, and $\E^i_F$, $\E^N_F$ for the expectation operators corresponding to the laws $P^i_F$, $P^N_F$ respectively. 

We model the parameter $F\in H^\alpha_c(\Oo)$ by a Borel probability measure $\Pi$ supported on the Banach space $C(\Oo)$. Since the map $(F,(y,z))\mapsto p_F(y,z)$ can be shown to be jointly measurable the posterior distribution $\Pi(\cdot\g Y^N,Z^N)$ of $F\g Y^N,Z^N$ arising from the model \eqref{eq:DataModel2} equals to 
\begin{align*}
\Pi\left(B \mid Y^N, Z^N\right)=\frac{\int_{B} e^{\ell^N(F)} d \Pi(F)}{\int_{C(\mathcal{O})} e^{\ell^N(F)} d \Pi(F)} 
\end{align*}
for any Borel set $B \subseteq C(\Oo)$. Above 
\begin{align*}
\ell^N(F)=-\frac{1}{2 \sigma^{2}} \sum_{i=1}^{N}(Y_{i}-\mathscr{G}(F)(Z_{i}))^{2}
\end{align*}
is the joint log-likelihood function up to a constant.

\section{Posterior consistency results}\label{Sec:Results}  

We consider priors that are build around a Gaussian base prior $\Pi'$ and then rescaled by $N^{-\gamma}$, with appropriate $\gamma>0$, to provide additional regularisation to combat the non-linearity of the inverse problem, as suggested in \cite{Monard2021} and further studied in \cite{Giordano2020b}, see also \cite{Nickl2020}. We first consider certain Gaussian process priors supported on $C^\beta$, $\beta\geq2$, and show that the posterior distributions arising from these priors concentrate near sufficiently regular ground truth $F_0$ (or $f_0$) assuming that the data $(Y^N,Z^N)$ is generated through model \eqref{eq:DataModel2} with $F=F_0$. We then prove a minimax lower bound for inferring $f$ from the data, and show that the optimal convergence rate can be achieved using truncated Gaussian base priors. The proofs of the theorems can be found from Section \ref{Sec:Proofs}.

In the following we are interested in recovering $F_0$ (or $f_0$) with $H^\alpha$, $\alpha>\beta+d/2\geq 2+d/2$, smoothness. For this we assume that $g\in H^{3/2+\alpha,3/4+\alpha/2}(\Sigma)$ and $u_0\in H^{1+\alpha}(\Oo)$ satisfy the following consistency condition; There exists $\psi\in H^{2+\alpha,1+\alpha/2}(Q)$ such that 
\begin{align}\label{eq:Consistency}
\begin{split}
\text{$\psi=g$ on $\Sigma$,} &  \text{$\quad \psi(x,0)=u_0$ on $\Oo$ and}\\
\quad \partial_t^k ((\partial_t-\frac{1}{2}\Delta_x+ f) & \psi)|_{t=0} =0
\text{\ \ for $0\leq k < \frac{\alpha}{2}-\frac{1}{2}$}.  
\end{split}
\end{align}
Then $u_{f}\in H^{2+\alpha,1+\alpha/2}(Q)\subset C^{2+\beta,1+\beta/2}(Q)$, \cite[Theorem 5.3]{Lions1972b}. Note that this is a general condition for the behaviour of the source and initial value functions close the boundary $\partial\Oo\times\{0\}$, and we could simply assume that $g,u_0$ equal to zero in a neighbourhood of $\partial\Oo\times\{0\}$ as in Section \ref{SubSec:BayesApp}.
The above assumption is sufficient but could be relaxed for many of the following results. We will assume it for simplicity.

\subsection{Rescaled Gaussian priors}
We refer, e.g., to \cite[Section 2]{Gine2015} for the basic definitions of Gaussian measures and their reproducing kernel Hilbert spaces (RKHS).

\begin{condition}\label{Con:Prior}
Let $\alpha>\beta+d/2$, with some $\beta\geq2$, and let $\Hh$ be a Hilbert space continuously imbedded
into $H^\alpha_c(\Oo)$. Let $\Pi'$ be a centred Gaussian Borel probability measure on the Banach space $C(\Oo)$ that is supported on a separable measurable linear subspace of $C^\beta(\Oo)$, and assume that the reproducing-kernel Hilbert space of $\Pi'$ equals to $\Hh$.
\end{condition}

As a simple example of a base prior satisfying Condition \ref{Con:Prior} we can consider Whittle-Mat\'ern process $M=\{M(x),x\in\Oo\}$ of regularity $\alpha-d/2$, see \cite[Example 25]{Giordano2020b} or \cite[Example 11.8]{Ghosal2017} for details. Assume that the measurement \eqref{eq:DataModel2} is generated from a 'true unknown' $F_0\in H^\alpha(\Oo)$ that is supported on a given compact subset $K$ of the domain $\Oo$, and fix a smooth cutoff function $\chi\in C^\infty_c(\Oo)$ such that $\chi=1$ on $K$.  Then $\Pi' = \mathcal{L}(\chi M)$ is supported on $C^{\beta'}(\Oo)$  for any $\beta'<\alpha-d/2$, and its RKHS $\Hh = \{\chi F, F\in H^\alpha(\Oo)\}$ is continuously imbedded into $H^\alpha_c(\Oo)$ and contains $H^\alpha_K(\Oo)$. The condition $F_0\in\Hh$ then equals to the standard assumption that $F_0\in H^\alpha(\Oo)$ is supported on a strict subset $K$ of $\Oo$.

In the following we consider the re-scaled prior 
\begin{align}\label{eq:Prior}
\Pi_{N}=\mathcal{L}\left(F_{N}\right), \quad F_{N}=N^{-\frac{d}{4 \alpha+8+2 d}} F',
\end{align}
where $F'\sim\Pi'$. Then $\Pi_N$ defines a centred Gaussian prior on $C(\Oo)$, and its RKHS $\Hh_N$ is given by $\Hh$ with the norm
\begin{align*}
\|F\|_{\Hh_N}=N^{\frac{d}{4 \alpha+8+2 d}} \|F\|_{\Hh}.
\end{align*}

\begin{theorem}\label{Thm:ContractionF}  
For a fixed integer $\alpha>\beta+d/2$, $\beta\geq2$, consider the Gaussian prior $\Pi_N$ in \eqref{eq:Prior} with the base prior $\Pi'$ satisfying Condition \ref{Con:Prior} with  RKHS $\Hh$. Let $\Pi_N(\cdot\g Y^N,Z^N)$ be the resulting posterior distribution arising from observations $(Y^N,Z^N)$ in \eqref{eq:DataModel2} with $F=F_0\in\Hh$, and set $\delta_N=N^{-(\alpha+2)/(2\alpha+4+d)}$.

Then for any $D>0$ there exists a sufficiently large $L>0$ such that, as $N\to\infty$, 
\begin{align}\label{eq:ForwardConvergenceF}
\Pi_{N}\left(F:\left\|\mathscr{G}(F)-\mathscr{G}\left(F_{0}\right)\right\|_{L^{2}(Q)}>L \delta_{N} \mid Y^N,Z^N\right)=O_{P_{F_{0}}^{N}}\left(e^{-D N \delta_{N}^{2}}\right)
\end{align}
and for sufficiently large $M>0$
\begin{align}\label{eq:NormBoundF}
\Pi_{N}\left(F:\|F\|_{C^{\beta}(\Oo)}>M \mid Y^N,Z^N\right)=O_{P_{F_{0}}^{N}}\left(e^{-D N \delta_{N}^{2}}\right).
\end{align}
\end{theorem}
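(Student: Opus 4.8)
The plan is to read \eqref{eq:ForwardConvergenceF} and \eqref{eq:NormBoundF} as a posterior contraction statement for the nonparametric Gaussian regression model \eqref{eq:DataModel2}, with regression function $\G(F)$, random design $\mu$ and forward (testing) distance $\|\G(F)-\G(F_0)\|_{L^2(Q)}$, and to derive them from the general testing theory of Ghosal--van der Vaart \cite{Ghosal2017} in the form adapted to PDE inverse problems in \cite{Monard2021, Giordano2020b}. For the density \eqref{eq:Density} the Kullback--Leibler divergence and variation are both a fixed multiple of $\|\G(F)-\G(F_0)\|_{L^2(\mu)}^2=\mathrm{vol}(Q)^{-1}\|\G(F)-\G(F_0)\|_{L^2(Q)}^2$, and on the uniformly bounded range of $\G$ the Hellinger distance is equivalent to the same quantity; hence exponentially powerful tests of $F_0$ against $\{\,\|\G(F)-\G(F_0)\|_{L^2(Q)}>L\delta_N\,\}$ exist once the local $L^2(Q)$-entropy of the competing hypotheses is controlled. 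It then suffices to verify, at the level $N\delta_N^2$, the three standard hypotheses for $\Pi_N$ in \eqref{eq:Prior}: a prior-mass lower bound on a $\delta_N$-Kullback--Leibler neighbourhood of $F_0$, an excess-mass bound on the complement of a sieve $\F_N$, and an entropy bound for $\G(\F_N)$.

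Everything rests on two forward estimates that I would establish first. Writing $f=\Phi\circ F$, $f_0=\Phi\circ F_0$ and $w=\G(F)-\G(F_0)=u_f-u_{f_0}$, the difference solves the linear parabolic problem $L_f w=(f_0-f)u_{f_0}$ with vanishing lateral and initial data. Using parabolic energy/Schauder estimates, the positivity $f\ge f_{\min}>0$, the smoothness of $u_{f_0}$ guaranteed by \eqref{eq:Consistency}, the multiplicative inequalities \eqref{eq:SobolevBound1}--\eqref{eq:SobolevBound2} to bound the product $(f_0-f)u_{f_0}$, and the chain rule through the regular link $\Phi$, I expect a forward Lipschitz bound $\|\G(F)-\G(F_0)\|_{L^2(Q)}\lesssim\|F-F_0\|$ in a norm weak enough to register the two-derivative smoothing of $L_f$ (morally an $H^{-2}(\Oo)$-type norm), together with a regularity bound $\|\G(F)\|_{H^{2+\alpha,1+\alpha/2}(Q)}\lesssim C(\|F\|_{C^\beta(\Oo)})$ sending bounded $C^\beta$- (hence $H^\alpha$-) balls into bounded $C^{2+\beta,1+\beta/2}$-balls.

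The prior-mass bound then reduces, through the forward Lipschitz estimate, to a small-ball estimate $\Pi_N(\|F-F_0\|\le c\delta_N)\ge e^{-CN\delta_N^2}$, which I would read off the Gaussian concentration function of $\Pi_N$ at $F_0$. Since $F_0\in\Hh$, its decentering term equals $\tfrac12\|F_0\|_{\Hh_N}^2=\tfrac12\eta_N^{-2}\|F_0\|_\Hh^2$ with $\eta_N=N^{-d/(4\alpha+8+2d)}$, and the rescaling in \eqref{eq:Prior} is chosen exactly so that $\eta_N^{-2}=N\delta_N^2$; the centred small-ball term, governed by the metric entropy of the RKHS unit ball in the smoothing norm, is of the same order, so the concentration function is $\lesssim N\delta_N^2$. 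For the sieve I would take the Borell--Sudakov--Tsirelson set $\F_N=M_N\mathbb{B}_{\Hh_N}+\delta_N\mathbb{B}_{C^\beta(\Oo)}$ with $M_N\asymp\sqrt{N}\delta_N$, giving $\Pi_N(\F_N^{\,c})\le e^{-(C+4)N\delta_N^2}$; because $\|\cdot\|_{\Hh_N}=\eta_N^{-1}\|\cdot\|_\Hh$ and $M_N\eta_N=O(1)$, the set $\F_N$ sits in a fixed ball of $\Hh\hookrightarrow H^\alpha_c(\Oo)\hookrightarrow C^\beta(\Oo)$, which together with the excess-mass bound yields the norm bound \eqref{eq:NormBoundF} for large $M$. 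Finally, by the regularity bound $\G(\F_N)$ is bounded in $H^{2+\alpha,1+\alpha/2}(Q)$, but --- and this is the point that fixes the rate --- its elements are solutions parametrised by the coefficient $f$ on the $d$-dimensional domain $\Oo$, so covering $\G(\F_N)$ in $L^2(Q)$ reduces via the forward Lipschitz estimate to covering a fixed $H^\alpha(\Oo)$-ball in the weak norm, whence $\log N(\delta_N,\G(\F_N),\|\cdot\|_{L^2(Q)})\lesssim\delta_N^{-d/(\alpha+2)}=N\delta_N^2$. Assembling the three bounds in the general contraction theorem gives \eqref{eq:ForwardConvergenceF}.

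I expect the main obstacle to be the PDE analysis underlying the two forward estimates, and in particular making the two-derivative parabolic smoothing visible: this is what forces the effective dimension of $\G(\F_N)$ to be the spatial dimension $d$ rather than the parabolic dimension $d+2$ of $Q$, and hence what makes the exponents balance to exactly $\delta_N=N^{-(\alpha+2)/(2\alpha+4+d)}$. Controlling the coefficient-to-solution map in a negative-order norm --- multiplication by the smooth factor $u_{f_0}$ and composition with $\Phi$ in such spaces, uniformly over the sieve, via \eqref{eq:SobolevBound1}--\eqref{eq:SobolevBound2} --- is the delicate technical step; once the forward estimates are in hand, the probabilistic ingredients (concentration function, Borell's inequality, construction of tests) are standard.
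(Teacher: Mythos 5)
Your proposal is correct and follows essentially the same route as the paper: the paper's own proof consists of verifying the two forward estimates you identify (the $(H^2(\Oo))^*$-to-$L^2(Q)$ Lipschitz bound with polynomially growing constant and the uniform sup-norm bound, Proposition \ref{Prop:ForwardEstimate}) and then invoking the general contraction theorem of Giordano--Nickl \cite[Theorem 14]{Giordano2020b}, whose testing/prior-mass/sieve/entropy mechanism you have accurately reconstructed, including the role of the rescaling in matching $\eta_N^{-2}$ to $N\delta_N^2$ and the Borell-type sieve yielding \eqref{eq:NormBoundF}.
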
  

Next we will formulate a theorem about the  posterior contraction around $f_0$ in $L^2$-norm. For this we need the following push-forward posterior distribution 
\begin{align}\label{eq:PushForwardPrior}
\tilde{\Pi}_N(\cdot\g Y^N,Z^N) = \mathcal{L}(f), \quad f=\Phi\circ F, \quad F\sim \Pi_N(\cdot\g Y^N,Z^N).  
\end{align}

\begin{theorem}\label{Thm:Contractionf}
Let $\Pi_N(\cdot\g Y^N,Z^N)$, $\delta_N$ and $F_0$ be as in Theorem \ref{Thm:ContractionF} with an integer $\beta\geq2$, and denote $f_0=\Phi\circ F_0$. Then for any $D>0$ there exists $L>0$ large enough such that, as $N\to\infty$, 
\begin{align*}
\tilde{\Pi}_N\big(f : \|f-f_0\|_{L^2(\Oo)}>L\delta_N^{\frac{\beta}{2+\beta}} \g Y^N,Z^N\big) 
= O_{P_{f_0}^N}(e^{-DN\delta_N^2}). 
\end{align*}
\end{theorem}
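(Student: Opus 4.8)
The plan is to convert the forward contraction of Theorem~\ref{Thm:ContractionF} into contraction of the parameter by a deterministic stability estimate for the inverse of $\mathscr{G}$, combined with an interpolation inequality that produces exactly the exponent $\beta/(2+\beta)$. Throughout write $w=\mathscr{G}(F)-\mathscr{G}(F_0)=u_f-u_{f_0}$, where $f=\Phi\circ F$ and $f_0=\Phi\circ F_0$, and observe that by \eqref{eq:ForwardConvergenceF} and \eqref{eq:NormBoundF} the posterior puts mass $1-O_{P^N_{f_0}}(e^{-DN\delta_N^2})$ on the event $A_N=\{\|w\|_{L^2(Q)}\le L'\delta_N\}\cap\{\|F\|_{C^\beta(\Oo)}\le M\}$. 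It therefore suffices to show that on $A_N$ one has the deterministic bound $\|f-f_0\|_{L^2(\Oo)}\lesssim \delta_N^{\beta/(2+\beta)}$, after which the conclusion follows by choosing $L$ larger than the implied constant and passing to the push-forward posterior $\tilde\Pi_N$ via $f=\Phi\circ F$, using that $\{\,\|\Phi\circ F-f_0\|_{L^2}>L\delta_N^{\beta/(2+\beta)}\,\}\subseteq A_N^c$.

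First I would derive the stability estimate. Subtracting the equations $L_fu_f=0$ and $L_{f_0}u_{f_0}=0$ and using $fu_f-f_0u_{f_0}=fw+(f-f_0)u_{f_0}$ gives
\begin{align*}
L_f w=\partial_t w-\tfrac12\Delta_x w+fw=-(f-f_0)\,u_{f_0}.
\end{align*}
Under the standing positivity assumptions on $g,u_0$, the Feynman--Kac representation~\eqref{eq:FeynmanKacHomogeneous} together with continuity of $u_{f_0}$ on the compact set $\overline Q$ yields a uniform lower bound $u_{f_0}\ge c>0$. Since $f-f_0$ is independent of $t$, the identity gives
\begin{align*}
c\sqrt{\mathbf{T}}\,\|f-f_0\|_{L^2(\Oo)}\le \|(f-f_0)u_{f_0}\|_{L^2(Q)}=\|L_f w\|_{L^2(Q)}\lesssim(1+\|f\|_\infty)\,\|w\|_{H^{2,1}(Q)},
\end{align*}
the final step using that $\partial_t w$ and $D^2_x w$ lie in $L^2(Q)$ whenever $w\in H^{2,1}(Q)$.

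It then remains to control $\|w\|_{H^{2,1}(Q)}$ by interpolating between the small $L^2(Q)$ norm and a bounded high-order norm. On $A_N$ the regularity of the link function (Definition~\ref{Def:LinkFunction}) bounds $\|f\|_{C^\beta(\Oo)}$ in terms of $\|F\|_{C^\beta(\Oo)}$, so parabolic Schauder theory gives $u_f\in C^{2+\beta,1+\beta/2}(Q)$ with a bound depending only on $M$; hence $w$ is bounded in $H^{2+\beta,1+\beta/2}(Q)$, using $C^{2+\beta,1+\beta/2}(Q)\hookrightarrow H^{2+\beta,1+\beta/2}(Q)$ on the bounded cylinder $Q$ (or the already available $u_f\in H^{2+\alpha,1+\alpha/2}(Q)$ with $\alpha>\beta$). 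Interpolating the parabolic Sobolev scale $H^{s,s/2}(Q)$ between $s=0$ and $s=2+\beta$ at the intermediate value $s=2$, i.e. with $\theta=\tfrac{2}{2+\beta}$, yields
\begin{align*}
\|w\|_{H^{2,1}(Q)}\lesssim \|w\|_{L^2(Q)}^{1-\theta}\,\|w\|_{H^{2+\beta,1+\beta/2}(Q)}^{\theta}\lesssim \delta_N^{\beta/(2+\beta)},
\end{align*}
and combining with the stability estimate gives the required $\|f-f_0\|_{L^2(\Oo)}\lesssim\delta_N^{\beta/(2+\beta)}$ on $A_N$.

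The step I expect to be the main obstacle is the interpolation inequality: one must verify that the $L^2$-based parabolic Sobolev spaces $H^{s,s/2}(Q)$ form an interpolation scale with the stated intermediate index, so that $H^{2,1}(Q)$ is reached precisely at $\theta=2/(2+\beta)$, and that the anisotropic H\"older bound on $u_f$ transfers to the $H^{2+\beta,1+\beta/2}(Q)$ norm on the bounded cylinder. Aligning these two facts is exactly what pins down the rate $\delta_N^{\beta/(2+\beta)}$; the remaining ingredients -- the algebraic stability identity, the positivity lower bound on $u_{f_0}$, and the transfer of the exponential posterior bound from $\Pi_N$ to $\tilde\Pi_N$ -- are routine.
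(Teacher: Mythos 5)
Your proposal has the same architecture as the paper's proof: restrict to the high-posterior-probability event where $\|\mathscr{G}(F)-\mathscr{G}(F_0)\|_{L^2}\le L'\delta_N$ and $\|F\|_{C^\beta}\le M$ (via \eqref{eq:ForwardConvergenceF}, \eqref{eq:NormBoundF} and the link-function bound), upgrade the $L^2$ forward error to an $H^{2,1}(Q)$ error by interpolating against a uniform $H^{2+\beta,1+\beta/2}(Q)$ bound — which is exactly where the exponent $\beta/(2+\beta)$ comes from — and then invert via a stability estimate $\|f-f_0\|_{L^2(\Oo)}\lesssim\|u_f-u_{f_0}\|_{H^{2,1}(Q)}$. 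You deviate in how the two auxiliary estimates are obtained. Your stability estimate, via $L_f w=-(f-f_0)u_{f_0}$ for $w=u_f-u_{f_0}$ and the uniform lower bound on $u_{f_0}$ alone, is cleaner than the paper's Proposition \ref{Prp:StabilityEstimate}, which writes $f=(\tfrac12\Delta_x-\partial_t)u_f/u_f$ and must control differences of reciprocals; both yield the same conclusion on the event in question.

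The one step that is not watertight as written is the high-norm bound on posterior draws. You propose parabolic Schauder theory for $f$ in a $C^\beta$-ball followed by the embedding $C^{2+\beta,1+\beta/2}(Q)\hookrightarrow H^{2+\beta,1+\beta/2}(Q)$. The theorem works with integer $\beta$, where classical Schauder estimates must be taken in the Zygmund scale $\Cc^{2+\beta,1+\beta/2}$, and the embedding $\Cc^{s,s/2}(Q)=B^{s,s/2}_{\infty,\infty}(Q)\hookrightarrow H^{s,s/2}(Q)$ fails in general without an $\epsilon$-loss of smoothness (already $C^{1/2}\not\subset H^{1/2}$ on a bounded interval, which is relevant to the fractional time index $1+\beta/2$ for odd $\beta$); such a loss would degrade the exponent to $\beta'/(2+\beta')$ with $\beta'<\beta$. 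Your fallback ``$u_f\in H^{2+\alpha,1+\alpha/2}$'' is only available for the true $u_{f_0}$, not for posterior draws, whose coefficient is only controlled in $C^\beta$. The paper circumvents this precisely with Proposition \ref{Prp:NormEstimate}, proved by the parabolic Sobolev isomorphism of Lions--Magenes together with the multiplier inequality \eqref{eq:SobolevBound2} $\|fg\|_{H^{s,s/2}}\lesssim\|f\|_{\Cc^{s,s/2}}\|g\|_{H^{s,s/2}}$ and the interpolation inequality \eqref{eq:Interpolation}, which gives $\|u_f\|_{H^{2+\beta,1+\beta/2}}\lesssim 1+\|f\|_{\Cc^\beta}^{1+\beta/2}$ with no loss. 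You correctly identified this transfer as the main obstacle; substituting the paper's Sobolev-based argument for the Schauder-plus-embedding route closes it, and the rest of your proof goes through.
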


We can also show that the posterior mean $\E^\Pi(F \g Y^N,Z^N)$ of $\Pi_N(\cdot \g Y^N,Z^N)$ converges to $F_0$ with speed $\delta_N^\frac{\beta}{2+\beta}$. 

\begin{theorem}\label{Thm:PosteriorMean}
Under the assumptions of Theorem \ref{Thm:Contractionf} let $\bar{F}_N=\E^\Pi(F \g Y^N,Z^N)$ be the mean of $\Pi_N(\cdot \g Y^N,Z^N)$. Then, as $N\to\infty$,  we have 
\begin{align*}
P_{F_0}^N\big(\|\bar{F}_N-F_0\|_{L^2(\Oo)}>L\delta_N^\frac{\beta}{2+\beta}\big)\to0. 
\end{align*}  
\end{theorem}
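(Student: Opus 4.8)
The plan is to deduce convergence of the posterior mean from the posterior contraction result in Theorem~\ref{Thm:Contractionf}, using the standard strategy that combines high-probability contraction with a uniform integrability / tail bound argument. The key observation is that $\bar F_N = \E^\Pi(F\g Y^N,Z^N)$ is a Bochner-type average of $F$ against the posterior, so by Jensen's inequality in $L^2(\Oo)$ we have
\begin{align*}
\|\bar F_N - F_0\|_{L^2(\Oo)}
= \Big\| \E^\Pi\big( (F-F_0)\g Y^N,Z^N\big)\Big\|_{L^2(\Oo)}
\leq \E^\Pi\big( \|F-F_0\|_{L^2(\Oo)}\g Y^N,Z^N\big).
\end{align*}
Thus it suffices to control the posterior expectation of $\|F-F_0\|_{L^2}$, not just its probability mass. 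First I would split this conditional expectation over the event $A_N=\{F:\|F-F_0\|_{L^2(\Oo)}\leq L\delta_N^{\beta/(2+\beta)}\}$ and its complement. On $A_N$ the integrand is bounded by $L\delta_N^{\beta/(2+\beta)}$, giving exactly the desired rate. The whole problem is therefore reduced to showing that the contribution from the complement $A_N^c$ is of smaller order.

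For the complement, I would use Cauchy--Schwarz on the posterior to write
\begin{align*}
\E^\Pi\big(\|F-F_0\|_{L^2}\1_{A_N^c}\g Y^N,Z^N\big)
\leq \Big(\E^\Pi\big(\|F-F_0\|_{L^2}^2\g Y^N,Z^N\big)\Big)^{1/2}
\Pi_N(A_N^c\g Y^N,Z^N)^{1/2}.
\end{align*}
By Theorem~\ref{Thm:Contractionf} the posterior mass $\Pi_N(A_N^c\g Y^N,Z^N)$ is $O_{P_{f_0}^N}(e^{-DN\delta_N^2})$, an exponentially small factor with $D$ at our disposal. So the remaining task is to bound the second posterior moment $\E^\Pi(\|F-F_0\|_{L^2}^2\g Y^N,Z^N)$ by a quantity that grows at most polynomially (or sub-exponentially) in $N$, so that the product is $o(\delta_N^{\beta/(2+\beta)})$ with overwhelming probability. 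Since the exponential decay can absorb any polynomial factor by choosing $D$ large, even a crude moment bound suffices.

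The main obstacle is controlling this second moment, because the unnormalised posterior integral $\int \|F-F_0\|_{L^2}^2 e^{\ell^N(F)}d\Pi_N(F)$ must be handled without the benefit of the contraction localisation. The natural route is to bound the normalising constant (the denominator of the posterior) from below by the usual evidence lower bound on a small ball around $F_0$ in the RKHS $\Hh$, which holds on an event of probability tending to one (a standard step via the small-ball/concentration estimates for $\Pi_N$ used already in proving Theorem~\ref{Thm:ContractionF}). For the numerator I would use the trivial bound $\ell^N(F)\leq 0$ together with the Gaussian nature of $\Pi_N$: the prior second moment $\E^{\Pi_N}\|F-F_0\|_{L^2}^2 \leq 2\E^{\Pi_N}\|F\|_{L^2}^2 + 2\|F_0\|_{L^2}^2$ is finite and, since $\Pi_N$ is a Gaussian measure rescaled by $N^{-d/(4\alpha+8+2d)}$, grows only polynomially in $N$ (indeed it decays). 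Combining the exponentially small denominator lower bound with the at-most-polynomial prior moment yields a moment bound of order $e^{cN\delta_N^2}$ for some controllable $c$, after which choosing $D>c/2$ in the contraction statement forces the complement contribution to vanish faster than $\delta_N^{\beta/(2+\beta)}$. The delicate point to verify carefully is that the exponent in the denominator lower bound is itself proportional to $N\delta_N^2$ with a constant that can be dominated by $D$, so that the two exponential rates compete in the right direction; this is exactly the quantitative balance that the rescaling of the prior in \eqref{eq:Prior} is designed to guarantee.
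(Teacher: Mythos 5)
Your overall strategy coincides with the paper's: the paper proves this theorem by invoking the proof of \cite[Theorem 6]{Giordano2020b}, which is precisely the Jensen/Cauchy--Schwarz decomposition you describe --- bound $\|\bar F_N-F_0\|_{L^2}$ by the posterior expectation of $\|F-F_0\|_{L^2}$, split over the contraction event, and on the complement pair the exponentially small posterior mass against a posterior second moment controlled via an evidence lower bound of order $e^{-cN\delta_N^2}$ for the denominator and the finite (indeed shrinking) prior second moment for the numerator, finally absorbing the resulting $e^{cN\delta_N^2}$ factor by taking $D$ large in the contraction statement. That quantitative balance is exactly the one the paper relies on.

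One step is glossed over and does not work as written. You set $A_N=\{F:\|F-F_0\|_{L^2(\Oo)}\le L\delta_N^{\beta/(2+\beta)}\}$ and assert that Theorem \ref{Thm:Contractionf} makes $\Pi_N(A_N^c\g Y^N,Z^N)$ exponentially small. But Theorem \ref{Thm:Contractionf} is a statement about the push-forward posterior in $f=\Phi\circ F$, i.e.\ about $\|f-f_0\|_{L^2}$, not about $\|F-F_0\|_{L^2}$. Since $\Phi:\R\to(f_{\min},\infty)$ is bounded below, $\Phi'$ necessarily degenerates near $-\infty$, so $\Phi^{-1}$ is not globally Lipschitz and smallness of $\|f-f_0\|_{L^2}$ does not by itself give smallness of $\|F-F_0\|_{L^2}$. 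The fix is exactly what \eqref{eq:NormBoundF} provides: intersect with the event $\{\|F\|_{C^\beta}\le M\}$, on which $F$ and $F_0$ take values in a fixed compact interval where $\Phi'$ is bounded away from zero, so that $|F-F_0|\lesssim|f-f_0|$ pointwise and hence $\|F-F_0\|_{L^2}\lesssim\|f-f_0\|_{L^2}$; the excluded event also carries exponentially small posterior mass. This is why the paper cites both Theorem \ref{Thm:ContractionF} and Theorem \ref{Thm:Contractionf}. With that localisation inserted, your argument is complete.
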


Note that, since a composition with the link function $\Phi$ is $L^2$-Lipschitz, the above result also holds for the original potential $f$, that is, we can replace $\|\bar{F}_N-F_0\|_{L^2}$ by $\|\Phi\circ \bar{F}_N-f_0\|_{L^2}$. 
The proof of Theorem \ref{Thm:PosteriorMean} follows directly from Theorems \ref{Thm:ContractionF} and \ref{Thm:Contractionf}, and the proof of \cite[Theorem 6]{Giordano2020b}.

We now give a minimax lower bound on the rate of estimation for $f$. We also note that, by modifying the proof of Theorem \ref{Thm:LowerBound}, one can show that the rate $\delta_N$ achieved in \eqref{eq:ForwardConvergenceF} for the PDE-constrained regression problem of recovering $\mathcal{G}(F_0)$ in prediction loss is minimax optimal. Notice that the following theorem gives a lower bound that holds for any estimate for $f$, not just the one studied in this paper.

\begin{theorem}\label{Thm:LowerBound}
Let $\alpha>2+d/2$ and $g,u_0$ be as in \eqref{eq:Consistency} with $\alpha$ replaced by $\alpha+d/2$. Then there exists $c>0$ such that for $\epsilon>0$ arbitrarily small, as $N\to\infty$,
\begin{align}\label{eq:Lowerbound}
\liminf_{\hat{f}_N}\sup_{f\in\tilde{\F}_{\alpha}} 
P_f^N\Big(\|\hat{f}_N-f\|_{L^2(\Oo)}>cN^{-\frac{\alpha}{2\alpha+4+d}}\Big)\geq 1-\varepsilon,
\end{align}
where  
$\tilde{\F}_{\alpha}=
\{f \in C^\alpha(\mathcal{O}): \inf _{x \in \mathcal{O}} f(x)\geq f_{\min}>0, \|f\|_{C^\alpha}\leq B\}$, with any sufficient large $B>0$, and the infimum is taken over all measurable functions $\hat{f}_N=\hat{f}(Y^N,Z^N ,g,u_0)$, where the observations $(Y^N,Z^N)$ are generated through model  \eqref{eq:Observationf}.
\end{theorem}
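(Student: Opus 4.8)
The plan is to prove \eqref{eq:Lowerbound} by the classical information-theoretic reduction to a finite family of hypotheses that are well separated in $L^2(\Oo)$ but statistically indistinguishable, combined with Fano's inequality. Fix a constant base level $f_\star\equiv c_0$ with $c_0>f_{\min}$; this lies in the interior of $\tilde{\F}_\alpha$ since that class carries no boundary constraints. Fix a compact cube $K\subset\Oo$ and a single bump $\phi\in C^\infty_c$ supported in the unit cube, normalised and chosen to have vanishing moments up to order one, i.e.\ $\int\phi=0$ and $\int x_i\phi=0$ for all $i$ (these two vanishing moments are what will encode the second-order smoothing of the PDE). For a bandwidth $h=h_N\to0$, tile a fixed subcube of $K$ by $m\asymp h^{-d}$ disjoint cubes $Q_k$ of side $h$ centred at points $x_k$, and set $\phi_k(x)=c\,h^{\alpha}\phi((x-x_k)/h)$. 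For $\beta\in\{0,1\}^m$ define
\begin{align*}
f_\beta=f_\star+\sum_{k=1}^m\beta_k\,\phi_k .
\end{align*}

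\textbf{Membership and separation.} Because the amplitude $h^\alpha$ exactly compensates the factor $h^{-\alpha}$ produced by differentiating $\phi((\cdot-x_k)/h)$ at the top order, each $\phi_k$ satisfies $\|\phi_k\|_{C^\alpha(\Oo)}\lesssim c\,\|\phi\|_{C^\alpha}$ uniformly in $h$; since the $\phi_k$ have pairwise disjoint supports, $\|f_\beta\|_{C^\alpha}\le B$ for $c$ small (and $B$ large), while $\inf f_\beta\ge c_0-c\|\phi\|_\infty h^\alpha\ge f_{\min}$ once $h$ is small, so $f_\beta\in\tilde{\F}_\alpha$. A Varshamov--Gilbert argument produces a subfamily $f_{\beta^{(0)}},\dots,f_{\beta^{(M)}}$ with $M\ge 2^{m/8}$ and pairwise Hamming distance $\ge m/8$. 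As $\|\phi_k\|_{L^2(\Oo)}^2\asymp h^{2\alpha+d}$ and the supports are disjoint, this yields the separation $\|f_{\beta^{(j)}}-f_{\beta^{(l)}}\|_{L^2(\Oo)}\gtrsim\sqrt{m\,h^{2\alpha+d}}\asymp h^\alpha$, and also $\log M\gtrsim m$.

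\textbf{The information bound (the crux).} For the Gaussian design model \eqref{eq:Observationf} the divergence factorises as $\mathrm{KL}(P^N_{f_{\beta^{(j)}}}\|P^N_{f_{\beta^{(0)}}})=\tfrac{N}{2\sigma^2\mathrm{vol}(Q)}\|G(f_{\beta^{(j)}})-G(f_{\beta^{(0)}})\|_{L^2(Q)}^2$, so everything reduces to bounding the forward difference. Writing $v=f_{\beta^{(j)}}-f_{\beta^{(0)}}$ (a signed sum of at most $m$ bumps) and $w=G(f_{\beta^{(j)}})-G(f_{\beta^{(0)}})$, the function $w$ solves exactly
\begin{align*}
\partial_t w-\tfrac12\Delta_x w+f_{\beta^{(0)}}\,w=-\,v\,u_{f_{\beta^{(j)}}}\ \ \text{on }Q,\qquad w|_\Sigma=0,\quad w(\cdot,0)=0 .
\end{align*}
The heart of the matter is the estimate $\|w\|_{L^2(Q)}^2\lesssim m\,h^{2\alpha+4+d}$, which must capture \emph{both} the two-derivative smoothing of the parabolic solution operator and the near-additivity over the disjoint bumps. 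I would obtain it in two moves. First, testing against an arbitrary $\varphi\in L^2(Q)$, solving the adjoint backward problem $-\partial_t\zeta-\tfrac12\Delta_x\zeta+f_{\beta^{(0)}}\zeta=\varphi$ with $\zeta(\cdot,\textbf{T})=0$, $\zeta|_\Sigma=0$, and integrating by parts gives $\langle w,\varphi\rangle_{L^2(Q)}=-\int_0^{\textbf{T}}\langle v\,u_{f_{\beta^{(j)}}},\zeta\rangle\,dt$; combined with maximal $L^2$-regularity $\|\zeta\|_{L^2((0,\textbf{T});H^2(\Oo))}\lesssim\|\varphi\|_{L^2(Q)}$ for the parabolic operator, this yields $\|w\|_{L^2(Q)}\lesssim\|v\,u_{f_{\beta^{(j)}}}\|_{L^2((0,\textbf{T});H^{-2}(\Oo))}$, i.e.\ the forward map gains two spatial derivatives. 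Second, the solutions $u_{f_{\beta^{(j)}}}$ are uniformly bounded in $C^2(\overline Q)$ over the family (by the forward regularity theory, which is exactly why the hypotheses on $g,u_0$ in \eqref{eq:Consistency} are imposed with $\alpha$ raised to $\alpha+d/2$), so the smooth multiplier is absorbed by the algebra estimate $\|u_f\eta\|_{H^2}\lesssim\|\eta\|_{H^2}$, and it remains to bound $\langle v,u_f\eta\rangle$ for $\eta\in H^2$. The vanishing moments of $\phi$ let me subtract the degree-one Taylor polynomial of $u_f\eta$ on each cube $Q_k$, producing the gain $h^2$ per bump, while a Cauchy--Schwarz over the disjoint supports produces the factor $\sqrt{|S|}$ rather than $|S|$; together these give $\|v\,u_f\|_{H^{-2}}\lesssim\sqrt{|S|}\,h^{\alpha+2+d/2}$, hence the claim. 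This simultaneous control is the main obstacle: dropping the smoothing recovers the too-fast direct-regression rate $N^{-\alpha/(2\alpha+d)}$, while dropping the additivity gives $N^{-\alpha/(2\alpha+4)}$, so both the $h^4$ gain and the $\sqrt{|S|}$ scaling must be extracted at once, with constants uniform over the family.

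\textbf{Assembly.} Granting the crux, $\max_j\mathrm{KL}\lesssim N m\,h^{2\alpha+4+d}$ while $\log M\gtrsim m$, so the Fano condition $\max_j\mathrm{KL}\le\kappa\log M$ holds once $N h^{2\alpha+4+d}\le\kappa'$. Choosing $h_N=(\kappa'/N)^{1/(2\alpha+4+d)}$ makes the $L^2(\Oo)$-separation of order $N^{-\alpha/(2\alpha+4+d)}$, the claimed rate, and setting $s$ equal to half this separation, Fano's inequality lower-bounds the minimax probability in \eqref{eq:Lowerbound} by a constant depending only on $\kappa$. Finally, since $M\to\infty$ as $N\to\infty$, letting $\kappa\downarrow0$ (equivalently shrinking the constant $c$ in the rate) drives the Fano constant to $1$, which yields the bound $\ge1-\varepsilon$ for arbitrarily small $\varepsilon$ and completes the proof.
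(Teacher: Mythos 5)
Your proposal is correct and follows essentially the same route as the paper: a hypercube of disjointly supported, amplitude-$h^\alpha$ perturbations separated via Varshamov--Gilbert, a KL bound obtained from the two-derivative parabolic smoothing of the forward map expressed as an $(H^2)^*$-type dual norm estimate on the perturbation, and a Fano-type multiple-testing reduction, with the scale $h\asymp N^{-1/(2\alpha+4+d)}$ balancing separation against information. The only difference is presentational: you use generic bumps with two vanishing moments and recover the $h^2$ gain plus the $\sqrt{m}$ aggregation by Taylor/Bramble--Hilbert subtraction and Cauchy--Schwarz, whereas the paper uses Daubechies wavelets so that the wavelet characterisation of the $H^{-2}$ norm delivers both at once.
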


We note that the rate in Theorem \ref{Thm:LowerBound} equals to $\delta_N^{\alpha/(2+\alpha)}$ and hence the rates achieved in Theorems \ref{Thm:Contractionf} and \ref{Thm:PosteriorMean}  would be optimal if we could replace $\beta$ by $\alpha$. The second part of Theorem \ref{Thm:ContractionF} with the proof of Theorem \ref{Thm:Contractionf} reveals that this suboptimal rate is due to the fact that, while we are interested in recovering $F_0\in \mathcal{H}\subset H_c^\alpha$, the posterior mass is concentrated in $C^\beta$-balls, with $\beta<\alpha-d/2$. To overcome this problem we next consider truncated Gaussian priors, whose posterior mass will be shown to be concentrated in some $H^\alpha$-balls, and which attain the optimal convergence rate.

\subsection{Truncated Gaussian priors}

In practice so called sieve-priors, which are concentrated on a finite-dimensional approximation of the parameter space supporting the prior, are often employed for computational reasons. One of the commonly used methods is to use the truncated  Karhunen--Lo\`eve series expansion of the Gaussian base prior $\Pi'$. The contraction rate \eqref{eq:ForwardConvergenceF} of the forward problem remains valid with these truncated priors if the approximation spaces are appropriately chosen and we show that the optimal rate of estimating $f_0$ can be achieved.

Let $\{\Psi_{l,r}, l\geq-1, r\in\Z^d\}$ be an orthonormal basis of $L^2(\R^d)$ composed of sufficiently regular, compactly supported Daubechies wavelets, see the proof of Theorem \ref{Thm:LowerBound} for more details. We assume that $F_0\in H^\alpha_K(\Oo)$ for some $K\in\Oo$, and denote by $\mathcal{R}_l$ the set of indices $r$ such that the support of $\Psi_{l,r}$ intersects with $K$. Fix any compact $K'\subset\Oo$ such that $K\subsetneq K'$ and a cut-off function $\chi\in C^\infty_c(\Oo)$ for which $\chi=1$ on $K'$. Let $\alpha>2+d/2$, and consider the prior $\Pi'_J$ arising as the law of the Gaussian sum 
\begin{align}\label{eq:TruncatedPrior}
\Pi_J'=\mathcal{L}(\chi \tilde{F}),\quad 
\tilde{F}=\sum_{\substack{l \leq J\\ r\in\mathcal{R}_l}} 2^{-\alpha l}F_{l,r}\Psi_{l,r}, \quad
F_{l,r}\stackrel{\text { iid }}{\sim}\mathcal{N}(0,1),  
\end{align}
where $J=J_N\to\infty$ is a deterministic truncation point. Then $\Pi_J'$ defines a centred Gaussian prior that is supported on a finite dimensional space 
\begin{align*}
\mathcal{H}_J:=\text{span}\{\chi \Psi_{l,r}, l\leq J, r\in\mathcal{R}_l\}\subset C(\Oo).
\end{align*}

We will next show that a posterior mean estimate arising from the above truncated and rescaled Gaussian prior attains the optimal convergence rate \eqref{eq:Lowerbound}. 

\begin{theorem}\label{Thm:TruncatedPrior}
Let $\Pi_N$ be the rescaled prior as in \eqref{eq:Prior}, where now $F'\sim \Pi_J'$ and $J=J_N\in\N$ is chosen so that $2^J\simeq N^{1/(2\alpha+4+d)}$. Let $\Pi_N(\cdot\g Y^N,Z^N)$ be the resulting posterior distribution arising from data $(Y^N,Z^N)$ in \eqref{eq:DataModel2} with $F=F_0\in H^\alpha_K(\Oo)$. Let $\delta_N$ be as in Theorem \ref{Thm:ContractionF}, and assume that $f_0=\Phi\circ F_0$. Then \eqref{eq:ForwardConvergenceF} remains valid and for any $D>0$, and for sufficiently large $M>0$, 
\begin{align}\label{eq:PosteriorHaBall}
\Pi_{N}\left(F:\|F\|_{H^{\alpha}(\Oo)}>M \mid Y^N,Z^N\right)=O_{P_{F_{0}}^{N}}\Big(e^{-D N \delta_{N}^{2}}\Big),
\end{align}
as $N\to\infty$.
Furthermore, let $\tilde{\Pi}_N(\cdot\g Y^N,Z^N)$ be the push-forward posterior as in \eqref{eq:PushForwardPrior}. Then there exists $L>0$ large enough  such that
\begin{align}\label{eq:OptimalRate}
\tilde{\Pi}_N\big(f : \|f-f_0\|_{L^2(\Oo)}>L N^{-\frac{\alpha}{2\alpha+4+d}}\g Y^N,Z^N\big) 
= O_{P_{f_0}^N}\Big(e^{-DN\delta_N^2}\Big),
\end{align}
as $N\to\infty$.
\end{theorem}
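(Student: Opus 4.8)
The plan is to deduce all three claims from the general posterior contraction machinery already invoked for Theorem \ref{Thm:ContractionF}, the one genuinely new ingredient being that the truncated prior concentrates in $H^\alpha$-balls rather than merely in $C^\beta$-balls. First I would re-establish the forward rate \eqref{eq:ForwardConvergenceF} by checking that $\Pi_N$ (now with $F'\sim\Pi_J'$) still verifies the hypotheses of the contraction theorem. The entropy/sieve condition becomes immediate, since $\Pi_N$ is supported on the finite-dimensional space $\mathcal{H}_J$ of dimension $\dim\mathcal{H}_J\simeq 2^{Jd}\simeq N^{d/(2\alpha+4+d)}=N\delta_N^2$, so the local metric entropy of any $\delta_N$-ball is $\lesssim N\delta_N^2$. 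The only delicate point is the prior-mass (small-ball) bound $\Pi_N(\|\G(F)-\G(F_0)\|_{L^2(Q)}\le\delta_N)\ge e^{-cN\delta_N^2}$: writing $F_0=P_JF_0+R_JF_0$ for the wavelet projection $P_J$ onto levels $l\le J$, the truncation tail satisfies $\|R_JF_0\|_{H^{-2}(\Oo)}\lesssim 2^{-(\alpha+2)J}\|F_0\|_{H^{\alpha}(\Oo)}\simeq\delta_N$, and since $\G$ inverts a second-order parabolic operator and hence smooths by two derivatives, the induced forward error $\|\G(P_JF_0)-\G(F_0)\|_{L^2(Q)}$ is $\lesssim\delta_N$. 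The remaining finite-dimensional Gaussian mass around $P_JF_0$ is bounded below exactly as in Theorem \ref{Thm:ContractionF}, and \eqref{eq:ForwardConvergenceF} follows.

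For \eqref{eq:PosteriorHaBall} I would exploit that, on the support $\mathcal{H}_J$ of the truncated prior, the reproducing-kernel norm of $\Pi_J'$ coincides up to constants with the $H^\alpha(\Oo)$-norm: for $\tilde F=\sum_{l\le J,\, r}2^{-\alpha l}F_{l,r}\Psi_{l,r}$ one has $\|\tilde F\|_{H^{\alpha}(\Oo)}^2\simeq\sum_{l\le J,\, r}F_{l,r}^2$, which is precisely the squared RKHS norm. After rescaling, $\|F_N\|_{H^{\alpha}(\Oo)}^2\simeq N^{-d/(2\alpha+4+d)}\sum_{l\le J,\, r}F_{l,r}^2$, a chi-squared variable with $k\simeq 2^{Jd}\simeq N\delta_N^2$ degrees of freedom and mean $O(1)$. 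A standard chi-squared tail bound then yields $\Pi_N(\|F\|_{H^{\alpha}(\Oo)}>M)\le e^{-cM^2 N\delta_N^2}$, which for $M$ large exceeds $e^{-DN\delta_N^2}$. Combining this prior tail bound with the evidence lower bound---available on a $P_{F_0}^N$-event of probability tending to one by the small-ball estimate above---through the usual likelihood-ratio argument gives \eqref{eq:PosteriorHaBall}.

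Finally, \eqref{eq:OptimalRate} follows by repeating the proof of Theorem \ref{Thm:Contractionf} with $\beta$ replaced throughout by $\alpha$. On the event where simultaneously $\|\G(F)-\G(F_0)\|_{L^2(Q)}\le L\delta_N$ and $\|F\|_{H^{\alpha}(\Oo)}\le M$---which by \eqref{eq:ForwardConvergenceF} and \eqref{eq:PosteriorHaBall} carries posterior mass $1-O_{P_{F_0}^N}(e^{-DN\delta_N^2})$---I would use parabolic regularity together with the Sobolev multiplier bounds \eqref{eq:SobolevBound1}--\eqref{eq:SobolevBound2} to control $\|u_f-u_{f_0}\|_{H^{2+\alpha,1+\alpha/2}(Q)}$ by a constant, and then interpolate $\|u_f-u_{f_0}\|_{H^{2,1}(Q)}\lesssim\|u_f-u_{f_0}\|_{L^2(Q)}^{\alpha/(2+\alpha)}\,\|u_f-u_{f_0}\|_{H^{2+\alpha,1+\alpha/2}(Q)}^{2/(2+\alpha)}$. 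Coupled with the stability estimate $\|f-f_0\|_{L^2(\Oo)}\lesssim\|u_f-u_{f_0}\|_{H^{2,1}(Q)}$, which stems from $f=(\tfrac12\Delta_x-\partial_t)u_f/u_f$ and the uniform lower bound $u_f\ge c>0$ on $\overline{Q}$, this delivers $\|f-f_0\|_{L^2(\Oo)}\lesssim\delta_N^{\alpha/(2+\alpha)}=N^{-\alpha/(2\alpha+4+d)}$, i.e. \eqref{eq:OptimalRate}.

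The \emph{main obstacle} is the $H^\alpha$-concentration \eqref{eq:PosteriorHaBall}: one must simultaneously keep the truncation coarse enough that the support dimension stays $\simeq N\delta_N^2$ (so that the chi-squared tail beats $e^{-DN\delta_N^2}$ and the RKHS norm controls $\|\cdot\|_{H^\alpha(\Oo)}$) yet fine enough that the tail $R_JF_0$ does not spoil the small-ball condition. Both constraints are met precisely by the calibration $2^J\simeq N^{1/(2\alpha+4+d)}$, and verifying this balance through the two-derivative smoothing of $\G$ is the crux of the argument.
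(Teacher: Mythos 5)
Your proposal is correct, and for the forward rate and the final contraction step it follows essentially the paper's route: the truncation bias is controlled through $\|F_0-P_JF_0\|_{(H^2)^*}\lesssim 2^{-J(\alpha+2)}\simeq\delta_N$ together with the Lipschitz estimate of Proposition \ref{Prop:ForwardEstimate}, and the optimal rate is obtained by rerunning the proof of Theorem \ref{Thm:Contractionf} with $C^\beta$ replaced by $H^\alpha$, using \eqref{eq:SobolevBound1} in place of \eqref{eq:SobolevBound2}, the $H^m$-version of the link-function bound, the interpolation inequality \eqref{eq:Interpolation} at $s=2+\alpha$, and the stability estimate of Proposition \ref{Prp:StabilityEstimate}.

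Where you genuinely diverge is the $H^\alpha$-concentration \eqref{eq:PosteriorHaBall}. The paper does not touch the prior tail directly: it invokes the Gaussian-process concentration results of \cite{Giordano2020b} to show the posterior charges the set $\mathcal{A}_N=\{F=F_1+F_2\in\mathcal{H}_J:\|F_1\|_{(H^2)^*}\leq Q\delta_N,\ \|F_2\|_{\mathcal{H}}\leq M'\}$, and then converts this into an $H^\alpha$-bound deterministically via the inverse (Bernstein-type) inequality $\|F\|_{H^\alpha(\Oo)}\lesssim 2^{J(\alpha+2)}\|F\|_{(H^2(\Oo))^*}$ valid on $\mathcal{H}_J$, which at the calibration $2^J\simeq N^{1/(2\alpha+4+d)}$ turns $\|F_1\|_{(H^2)^*}\leq Q\delta_N$ into $\|F_1\|_{H^\alpha}\leq Q$, while the RKHS piece is handled by $\|\cdot\|_{H^\alpha(\Oo)}\leq\|\cdot\|_{\mathcal{H}_J}$. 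You instead bound the prior mass of $\{\|F\|_{H^\alpha}>M\}$ directly: on $\mathcal{H}_J$ the rescaled prior's squared $H^\alpha$-norm is (up to the harmless cutoff $\chi$) a scaled $\chi^2_k$ variable with $k\simeq 2^{Jd}\simeq N\delta_N^2$ degrees of freedom and $O(1)$ mean, so its tail is $e^{-cM^2N\delta_N^2}$, and the excess-mass/evidence-lower-bound argument transfers this to the posterior. Both arguments are sound and rest on the same calibration of $J$; yours exploits the explicit finite-dimensional Gaussian structure and so bypasses the two-sided wavelet norm identifications on $\Oo$ that the paper needs to set up the inverse inequality, at the cost of being tied to the series form of the prior, whereas the paper's $\mathcal{A}_N$-plus-Bernstein route is the one that generalises to non-series sieve priors.
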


\begin{theorem}\label{Thm:PosteriorMeanTruncated}
Under the assumptions of Theorem \ref{Thm:TruncatedPrior} let $\bar{F}_N=\E^\Pi(F \g Y^N,Z^N)$ be the mean of $\Pi_N(\cdot \g Y^N,Z^N)$. Then, as $N\to\infty$,  we have 
\begin{align*}
P_{f_0}^N\big(\|\Phi\circ \bar{F}_N-f_0\|_{L^2(\Oo)}>L N^{-\frac{\alpha}{2\alpha+4+d}}\big)\to0. 
\end{align*}  
\end{theorem}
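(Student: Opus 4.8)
The plan is to deduce Theorem \ref{Thm:PosteriorMeanTruncated} from Theorem \ref{Thm:TruncatedPrior} in exactly the same way that Theorem \ref{Thm:PosteriorMean} was deduced from Theorems \ref{Thm:ContractionF} and \ref{Thm:Contractionf}, namely by following the argument of \cite[Theorem 6]{Giordano2020b}. The key point is that the posterior mean $\bar{F}_N = \E^\Pi(F \mid Y^N, Z^N)$ is, by Jensen's inequality applied in the Hilbert space $L^2(\Oo)$, well-controlled by the posterior itself: if the posterior concentrates in a small $L^2$-ball around $F_0$ with overwhelming probability, then so does its barycentre. First I would write, for the link-transformed mean, $\Phi\circ\bar F_N - f_0$ and reduce the claim to an $L^2$-bound on $\bar F_N - F_0$ using that composition with the regular link function $\Phi$ is globally $L^2$-Lipschitz (as noted after Theorem \ref{Thm:PosteriorMean}); this is legitimate because $\Phi'$ is bounded by regularity of $\Phi$.

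The main analytic step is to upgrade the $f$-level contraction \eqref{eq:OptimalRate} into an $F$-level $L^2$-contraction at the same rate $N^{-\alpha/(2\alpha+4+d)}$. To do this I would combine the forward contraction \eqref{eq:ForwardConvergenceF}, which controls $\|\G(F)-\G(F_0)\|_{L^2(Q)}$ at rate $\delta_N$, with the $H^\alpha$-norm bound \eqref{eq:PosteriorHaBall}, which confines the posterior to an $H^\alpha$-ball of fixed radius $M$. The stability estimate for the inverse of $\G$, together with the interpolation inequality between $L^2$ and $H^\alpha$ that underlies the passage from the $\delta_N$-rate to the $\delta_N^{\beta/(2+\beta)}$-rate in Theorem \ref{Thm:Contractionf}, now yields the improved exponent: because the posterior mass sits in an $H^\alpha$-ball rather than merely a $C^\beta$-ball, the interpolation is carried out at the full smoothness $\alpha$, producing $\delta_N^{\alpha/(2+\alpha)} = N^{-\alpha/(2\alpha+4+d)}$. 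Thus the event
\begin{align*}
A_N = \big\{\|\bar F_N - F_0\|_{L^2(\Oo)} \le L N^{-\frac{\alpha}{2\alpha+4+d}}\big\}
\end{align*}
has $P_{F_0}^N$-probability tending to one.

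Concretely I would argue as follows. Let $B_N = \{F : \|\Phi\circ F - f_0\|_{L^2(\Oo)} \le L N^{-\alpha/(2\alpha+4+d)}\}$ be the contraction set from \eqref{eq:OptimalRate}, intersected with the $H^\alpha$-ball of radius $M$ from \eqref{eq:PosteriorHaBall}. On the event (of $P_{f_0}^N$-probability $\to 1$) where both posterior complements are exponentially small, I split
\begin{align*}
\bar F_N - F_0 = \int_{B_N}(F - F_0)\, d\Pi_N(F\mid Y^N,Z^N) + \int_{B_N^c}(F-F_0)\, d\Pi_N(F\mid Y^N,Z^N).
\end{align*}
The first integral has $L^2$-norm at most the diameter of $B_N$ by Jensen, hence $\lesssim N^{-\alpha/(2\alpha+4+d)}$. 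For the second integral I control the integrand in $L^2$ by the $H^\alpha$-radius $M$ (the truncated prior has support in the finite-dimensional $\mathcal H_J$, so all moments are finite and the tail contribution is genuinely negligible), and multiply by the exponentially small posterior mass $\Pi_N(B_N^c\mid Y^N,Z^N) = O_{P_{f_0}^N}(e^{-DN\delta_N^2})$. Finally I apply $\Phi$ and use its Lipschitz property to transfer the bound from $\bar F_N$ to $\Phi\circ \bar F_N$.

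The main obstacle I anticipate is the tail term over $B_N^c$: unlike in a bounded-support setting, one must verify that $\int_{B_N^c}\|F-F_0\|_{L^2}\,d\Pi_N$ is controlled, which requires a uniform-integrability or a priori moment bound on $\|F\|_{L^2}$ under the posterior. Here the finite-dimensionality of the truncated prior support $\mathcal H_J$ with $2^J \simeq N^{1/(2\alpha+4+d)}$ is essential: it gives polynomial-in-$N$ bounds on the prior (hence posterior) second moment of $\|F\|_{L^2}$, so that the product of a polynomially-growing moment with the exponentially-small posterior mass $e^{-DN\delta_N^2}$ vanishes. Making this quantitative, and checking that the polynomial growth is dominated by the exponential for $D$ chosen large enough, is the one place where the truncation \eqref{eq:TruncatedPrior} is used in an indispensable way; the rest is a direct transcription of the argument in \cite[Theorem 6]{Giordano2020b}.
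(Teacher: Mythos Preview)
Your proposal is correct and takes essentially the same approach as the paper: the paper's proof is the single sentence ``follows directly from Theorem \ref{Thm:TruncatedPrior} and the proof of \cite[Theorem 6]{Giordano2020b}'', and you have correctly unpacked what that argument entails (Jensen/barycentre argument, split over $B_N$ and $B_N^c$, Lipschitz transfer via $\Phi$).

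One small correction: your claim that finite-dimensional truncation is ``indispensable'' for controlling the tail integral over $B_N^c$ is not right. The same tail argument from \cite[Theorem 6]{Giordano2020b} is invoked verbatim for the \emph{untruncated} Gaussian priors in Theorem \ref{Thm:PosteriorMean}, so the moment control over $B_N^c$ comes from Gaussian (Fernique-type) bounds on the prior together with the posterior norm concentration \eqref{eq:NormBoundF} or \eqref{eq:PosteriorHaBall}, not from finite-dimensionality. The truncation is essential only for obtaining the $H^\alpha$-ball concentration \eqref{eq:PosteriorHaBall} (instead of merely a $C^\beta$-ball), which is what upgrades the interpolation exponent from $\beta/(2+\beta)$ to $\alpha/(2+\alpha)$ and yields the optimal rate.
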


The proof of Theorem \ref{Thm:PosteriorMeanTruncated} follows directly from Theorem \ref{Thm:TruncatedPrior} and the proof of \cite[Theorem 6]{Giordano2020b}. 

From a non-asymptotic point of view, the $N$-dependent rescaling of the prior can be thought just as an adjustment of the covariance operator of the prior. A natural question that arises is whether the non-linear inverse problem considered here can be solved in a fully Bayesian way (prior independent of the measurement). One way of achieving this would be to employ a hierarchical prior that randomises the finite truncation point $J$ in the above Karhunen-Loéve series expansion. Such an approach has been investigated in \cite{Giordano2020b} for an elliptic PDE with similar consistency results as Theorems \ref{Thm:ContractionF} -\ref{Thm:PosteriorMean} for smooth enough ground truth. We do not pursue the topic in this paper.

\section{Proofs}\label{Sec:Proofs}

\subsection{Proofs of the main results}
The proof of Theorem \ref{Thm:PosteriorMean} follows directly from Theorems \ref{Thm:ContractionF} and \ref{Thm:Contractionf}, and the proof of \cite[Theorem 6]{Giordano2020b}.
The proofs of Theorems \ref{Thm:ContractionF} and \ref{Thm:Contractionf} rely on the following forward and stability estimates. The proofs of the Propositions can be found in Section \ref{SubSec:PropProofs}.

\begin{proposition}\label{Prop:ForwardEstimate}
Let $\G$ be the solution map defined in \eqref{eq:SolutioMap2} with $g,u_0$ as in \eqref{eq:Consistency}.
Let $\alpha>2+d/2$ and $F_1,F_2\in H^\alpha_c(\Oo)$. Then 
\begin{align*}
\|\G(F_1)-\G(F_2)\|_{L^2(Q)}\lesssim(1+\|F_1\|_{C^2(\Oo)}^4\vee\|F_2\|_{C^2(\Oo)}^4)\|F_1-F_2\|_{(H^2(\Oo))^*},
\end{align*}  
where $X^*$ denotes the topological dual space of a normed linear space $X$. Also, there exists $C>0$ such that 
\begin{align*}
\sup_{F\in H^\alpha_c(\Oo)}\|\G(F)\|_\infty\leq C(\|g\|_\infty+\|u_0\|_\infty)<\infty.
\end{align*}
\end{proposition}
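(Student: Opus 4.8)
The plan is to treat the two claims separately, establishing the uniform bound first since it follows immediately from the Feynman--Kac representation \eqref{eq:FeynmanKacHomogeneous}. Writing $f=\Phi\circ F$, every admissible coefficient satisfies $f\geq f_{\min}>0$, so both stochastic exponentials in \eqref{eq:FeynmanKacHomogeneous} are bounded by $1$. Since the indicators $\1_{\{\tau_t=t\}}$ and $\1_{\{\tau_t<t\}}$ correspond to complementary events, the two expectations are bounded respectively by $\|u_0\|_\infty\,\P^x(\tau_t=t)$ and $\|g\|_\infty\,\P^x(\tau_t<t)$, whose sum is at most $\|u_0\|_\infty+\|g\|_\infty$. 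Taking the supremum over $(x,t)\in Q$ yields $\|\G(F)\|_\infty\leq \|u_0\|_\infty+\|g\|_\infty$ uniformly in $F$, since positivity of $f$ was the only property used.

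For the stability estimate I would argue by duality. Set $u_i=\G(F_i)=G(f_i)$ with $f_i=\Phi\circ F_i$, and let $w=u_1-u_2$. Subtracting the equations $L_{f_1}u_1=L_{f_2}u_2=0$ and rewriting $f_1u_1-f_2u_2=f_1w+(f_1-f_2)u_2$ shows that $w$ solves the linear parabolic problem
\[
L_{f_1}w=-(f_1-f_2)u_2 \quad\text{on }Q,\qquad w|_\Sigma=0,\quad w(\cdot,0)=0,
\]
since $u_1,u_2$ share the same boundary and initial data. To estimate $\|w\|_{L^2(Q)}$, fix $\phi\in L^2(Q)$ and let $v=v_\phi$ solve the backward adjoint problem $L_{f_1}^*v:=-\partial_t v-\tfrac12\Delta_x v+f_1 v=\phi$ with $v|_\Sigma=0$ and $v(\cdot,\textbf{T})=0$; after the time reversal $t\mapsto \textbf{T}-t$ this is a standard forward problem, so maximal $L^2$-regularity gives $v\in H^{2,1}(Q)$ with $\|v\|_{H^{2,1}(Q)}\lesssim C(f_1)\|\phi\|_{L^2(Q)}$, the constant depending only on $\|f_1\|_\infty$ through the positive zeroth-order term. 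Integrating by parts (all boundary terms vanish by the homogeneous data) gives $\langle w,\phi\rangle_{L^2(Q)}=-\langle (f_1-f_2)u_2,v\rangle_{L^2(Q)}$.

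The point of the duality is that $f_1-f_2$ depends only on $x$, so Fubini lets me write $\langle (f_1-f_2)u_2,v\rangle_{L^2(Q)}=\langle f_1-f_2,h\rangle_{L^2(\Oo)}$ with $h(x)=\int_0^{\textbf{T}}(u_2v)(x,t)\,dt$, and hence $|\langle w,\phi\rangle|\leq \|f_1-f_2\|_{(H^2(\Oo))^*}\|h\|_{H^2(\Oo)}$. By Minkowski's integral inequality followed by the parabolic multiplication bound \eqref{eq:SobolevBound2} with $s=2$,
\[
\|h\|_{H^2(\Oo)}\leq \|u_2v\|_{L^2((0,\textbf{T});H^2(\Oo))}\leq \|u_2v\|_{H^{2,1}(Q)}\lesssim \|u_2\|_{\Cc^{2,1}(Q)}\|v\|_{H^{2,1}(Q)}.
\]
Finally I would pass from $f_1-f_2$ to $F_1-F_2$ by the mean-value factorisation $f_1-f_2=m\,(F_1-F_2)$ with $m=\int_0^1\Phi'(F_2+s(F_1-F_2))\,ds$; since $\Phi$ is regular, a further duality/multiplication step gives $\|f_1-f_2\|_{(H^2(\Oo))^*}\lesssim \|m\|_{\Cc^2(\Oo)}\|F_1-F_2\|_{(H^2(\Oo))^*}$. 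Taking the supremum over $\|\phi\|_{L^2(Q)}\leq 1$ and collecting the factors yields the claim.

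The main obstacle is the quantitative bookkeeping of the polynomial dependence on $\|F_i\|_{C^2(\Oo)}$: I must control $\|u_2\|_{\Cc^{2,1}(Q)}$ via parabolic regularity applied to $G(f_2)$ (using the embedding $u_{f_2}\in H^{2+\alpha,1+\alpha/2}(Q)\subset\Cc^{2,1}(Q)$), the adjoint maximal-regularity constant $C(f_1)$, and the multiplier norm $\|m\|_{\Cc^2(\Oo)}$, each of which depends --- through the chain rule and the bounded derivatives of the regular link $\Phi$ --- polynomially on $\|F_i\|_{C^2(\Oo)}$, and then verify that their product is dominated by $1+\|F_1\|_{C^2(\Oo)}^4\vee\|F_2\|_{C^2(\Oo)}^4$. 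Showing that these constants are genuinely polynomial, rather than the abstract and possibly non-explicit constants of general Schauder theory, is the delicate point; I expect to handle it by a direct parabolic energy argument that exploits the coercivity $f\geq f_{\min}>0$ to prevent exponential-in-time growth, together with the embeddings between the parabolic Hölder and Sobolev scales recalled in Section \ref{Sec:SetUp}.
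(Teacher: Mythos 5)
Your overall architecture is the same as the paper's: the sup-norm bound via Feynman--Kac and positivity of $f$, the observation that $w=u_{f_1}-u_{f_2}$ solves $L_{f_1}w=-(f_1-f_2)u_{f_2}$ with homogeneous data, the duality with the backward adjoint problem, the exploitation of the time-independence of $f_1-f_2$ to pass from a dual norm over $Q$ to $(H^2(\Oo))^*$, the multiplication inequality to peel off $u_{f_2}$ in $\Cc^{2,1}(Q)$, and the link-function Lipschitz estimate to convert $f_1-f_2$ into $F_1-F_2$. The paper packages these as Lemmas \ref{Lem:ConstantBound}--\ref{Lem:ForwardEstimate}; your energy argument with the coercive zeroth-order term is a legitimate substitute for the paper's Feynman--Kac proof of the $f$-uniform bound $\|w_{f,h}\|_{L^2(Q)}\lesssim\|h\|_{L^2(Q)}$, after which the $H^{2,1}$ bound with constant $(1+\|f\|_\infty)$ follows exactly as in Lemma \ref{Lem:H21Estimates} by writing $(\partial_t-\tfrac12\Delta_x)w=h-fw$ and using the $f=0$ isomorphism.

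The one genuine gap is the control of $\|u_{f_2}\|_{\Cc^{2,1}(Q)}$. The final estimate must be polynomial in $\|F_2\|_{C^2(\Oo)}$ only (degree $4$ in total), so this factor must be bounded by a polynomial in $\|f_2\|_\infty$. Your proposed route --- the embedding $H^{2+\alpha,1+\alpha/2}(Q)\subset\Cc^{2,1}(Q)$ applied to $u_{f_2}$ --- produces a constant controlled by $\|f_2\|_{\Cc^\alpha}^{1+\alpha/2}$ (cf.\ Proposition \ref{Prp:NormEstimate}), i.e.\ by high-order norms of $F_2$ that are not dominated by $\|F_2\|_{C^2}$; and an $L^2$-energy argument cannot by itself yield H\"older--Zygmund control of second derivatives. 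The paper closes this loop in Lemma \ref{Lem:C21Estimate}: the operator $(\partial_t-\tfrac12\Delta_x,\mathrm{tr}_{|\Sigma},\mathrm{tr}_{|\Oo})$ is an isomorphism from $\Cc^{2,1}(Q)$ onto a subspace of $C(Q)\times\Cc^{2,1}(\Sigma)\times\Cc^{2}(\Oo)$, so $\|u_{f_2}\|_{\Cc^{2,1}(Q)}\lesssim\|f_2u_{f_2}\|_\infty+\|g\|_{\Cc^{2,1}(\Sigma)}+\|u_0\|_{\Cc^2(\Oo)}\lesssim(1+\|f_2\|_\infty)$, where the last step uses the maximum-principle bound $\|u_{f_2}\|_\infty\leq\|g\|_\infty+\|u_0\|_\infty$ that you already established. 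Replacing your embedding step by this fixed-point of the Schauder-type isomorphism with the sup-norm bound is what makes the quantitative bookkeeping come out to $1+\|F_1\|_{C^2(\Oo)}^4\vee\|F_2\|_{C^2(\Oo)}^4$; without it the claimed uniformity over $C^2$-bounded sets, which is what the contraction theorem actually needs, is not obtained.
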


\begin{proof}[Proof of Theorem \ref{Thm:ContractionF}]    
It follows from Propositions \ref{Prop:ForwardEstimate} that the inverse problem \eqref{eq:DataModel2} falls in the general framework studied in \cite{Giordano2020b}, with $\beta=2$, $\gamma=4$, $\kappa=2$ and $S=c(\|g\|_\infty+\|u_0\|_\infty)$. Theorem \ref{Thm:ContractionF} then follows directly from \cite[Theorem 14]{Giordano2020b}. 
\end{proof}

\begin{proposition}\label{Prp:StabilityEstimate}  
Let $G(f)$ be the solution to \eqref{eq:SchrodingerHomogeneous} with $g,u_0$ as in \eqref{eq:Consistency} and $f,f_0\in\F_{\alpha,f_{\min}}$. Then 
\begin{align*}
\|f-f_0\|_{L^2(\Oo)}\lesssim & \
e^{c\|f\vee f_0\|_\infty}\|G(f)-G(f_0)\|_{H^{2,1}(Q)}.\\
\end{align*}  
\end{proposition}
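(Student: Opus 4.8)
The plan is to turn the quotient representation $f=((\tfrac12\Delta_x-\partial_t)u_f)/u_f$ noted in Section \ref{SubSec:BayesApp} into a differential identity and then estimate. Write $v=G(f_0)$ and $w=G(f)-G(f_0)$, and subtract the two copies of \eqref{eq:SchrodingerHomogeneous} solved by $G(f)$ and $G(f_0)$. Since the boundary and initial data cancel and $fG(f)-f_0v=fw+(f-f_0)v$, one obtains the pointwise identity on $Q$
\begin{align*}
(f-f_0)\,v=-\Big(\partial_t w-\tfrac12\Delta_x w+f w\Big),
\end{align*}
so, as $v>0$,
\begin{align*}
f(x)-f_0(x)=-\frac{1}{v(x,t)}\Big(\partial_t w-\tfrac12\Delta_x w+f w\Big)(x,t),\qquad (x,t)\in Q.
\end{align*}
The crucial structural fact is that the left-hand side does not depend on $t$.

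First I would establish a uniform lower bound for the denominator. Using the Feynman--Kac representation \eqref{eq:FeynmanKacHomogeneous} for $v=G(f_0)$, the positivity assumptions $g\ge g_{\min}>0$, $u_0\ge u_{0,\min}>0$ of Section \ref{SubSec:BayesApp}, and the crude bound $\exp(-\int_0^{\tau}f_0(X_s)\,ds)\ge e^{-\textbf{T}\|f_0\|_\infty}$, which holds on both events $\{\tau_t=t\}$ and $\{\tau_t<t\}$, the two indicator terms recombine to give
\begin{align*}
v(x,t)\ \ge\ \min(g_{\min},u_{0,\min})\,e^{-\textbf{T}\|f_0\|_\infty}\ =:\ v_{\min}>0,
\end{align*}
uniformly on $Q$; hence $\|1/v\|_\infty\le v_{\min}^{-1}\lesssim e^{\textbf{T}\|f_0\|_\infty}$.

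Next I would convert the pointwise identity into the claimed bound. Since $f-f_0$ is time-independent, $\|f-f_0\|_{L^2(\Oo)}^2=\textbf{T}^{-1}\|f-f_0\|_{L^2(Q)}^2$; squaring the identity, integrating over $Q$, and inserting the lower bound on $v$ yields
\begin{align*}
\|f-f_0\|_{L^2(\Oo)}^2\ \lesssim\ e^{2\textbf{T}\|f_0\|_\infty}\int_Q\Big|\partial_t w-\tfrac12\Delta_x w+f w\Big|^2\,d(x,t).
\end{align*}
The triangle inequality bounds the integrand by $|\partial_t w|+\tfrac12|\Delta_x w|+\|f\|_\infty|w|$, so the integral is $\lesssim(1+\|f\|_\infty^2)\big(\|\partial_t w\|_{L^2(Q)}^2+\|\Delta_x w\|_{L^2(Q)}^2+\|w\|_{L^2(Q)}^2\big)$. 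By the definition $H^{2,1}(Q)=L^2((0,\textbf{T});H^2(\Oo))\cap H^1((0,\textbf{T});L^2(\Oo))$, the term in parentheses is controlled by $\|w\|_{H^{2,1}(Q)}^2=\|G(f)-G(f_0)\|_{H^{2,1}(Q)}^2$. Absorbing $(1+\|f\|_\infty^2)^{1/2}\le e^{\|f\|_\infty}$ together with $e^{\textbf{T}\|f_0\|_\infty}$ into a single factor $e^{c\|f\vee f_0\|_\infty}$ (with, e.g., $c=\textbf{T}+1$) gives the claim.

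The main obstacle is the lower bound of the second step: producing the uniform-in-$Q$ estimate $v\ge v_{\min}>0$ with the sharp exponential dependence $e^{-\textbf{T}\|f_0\|_\infty}$, since it is precisely this factor that produces the $e^{c\|f\vee f_0\|_\infty}$ prefactor in the stability estimate. The remaining ingredients are routine: the cancellation yielding the differential identity and the embedding identifying the correct pieces of the $H^{2,1}(Q)$ norm both follow directly once the representation is in place.
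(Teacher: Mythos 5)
Your proof is correct and follows essentially the same route as the paper: both arguments rest on the Feynman--Kac lower bound $\inf_Q G(f_0)\gtrsim e^{-c\|f_0\|_\infty}$ and on the pointwise identity recovering $f-f_0$ from the parabolic operator applied to $w=G(f)-G(f_0)$, followed by control of the resulting terms via the $H^{2,1}(Q)$ norm. The only (harmless) difference is algebraic: you divide solely by $v=G(f_0)$, so the residual term is $fw/v$, whereas the paper differences the two quotients $(\tfrac12\Delta_x-\partial_t)u_{f_i}/u_{f_i}$ and must additionally estimate $|u_{f_1}^{-1}-u_{f_2}^{-1}|$; your rearrangement slightly streamlines that step.
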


To prove Theorem \ref{Thm:Contractionf} we will also need that the forward map $G$ maps bounded sets in $\mathcal{C}^\beta$ onto bounded sets in $H^{2+\beta,1+\beta/2}(Q)$.

\begin{proposition}\label{Prp:NormEstimate}
Let $\beta>0$ and $f\in C^\beta(\Oo)$, with $\inf_{x\in\Oo}f(x)\geq f_{\min}>0$. 
Then there exists a constant $C>0$ such that 
\begin{align*}
\|G(f)\|_{H^{2+\beta,1+\beta/2}(Q)}\leq C(1+\|f\|_{\Cc^{\beta}}^{1+\beta/2}).
\end{align*}
\end{proposition}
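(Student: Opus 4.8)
The plan is to read the PDE \eqref{eq:SchrodingerHomogeneous} as an inhomogeneous heat equation $L_0 u_f=-fu_f$ for the constant–coefficient operator $L_0=\partial_t-\tfrac12\Delta_x$, and to run a bootstrap up the parabolic Sobolev scale, using at each step the a priori estimate for $L_0$ together with the multiplicative inequality \eqref{eq:SobolevBound2}. The decisive point for obtaining the sharp exponent $1+\beta/2$ is that the lower–order norm of $u_f$ appearing on the right-hand side will be controlled by \emph{interpolation} against the $f$-independent sup-norm bound from Proposition \ref{Prop:ForwardEstimate}, so that each gain of two spatial (one temporal) derivatives costs exactly one factor of $\|f\|_{\Cc^\beta}$, rather than a full unit in an integer power.

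First I would record the base estimate. Write $C_0:=c(\|g\|_\infty+\|u_0\|_\infty)$ for the uniform bound $\|u_f\|_\infty\le C_0$ from Proposition \ref{Prop:ForwardEstimate}, so that $\|u_f\|_{L^2(Q)}\lesssim C_0$ is independent of $f$. Since $L_0$ has constant coefficients and the compatibility of the data $g,u_0$ at the parabolic corner $\partial\Oo\times\{0\}$ is supplied by \eqref{eq:Consistency} (with $\alpha\ge\beta$), the $L^2$-regularity estimate of \cite[Theorem 5.3]{Lions1972b} applied to $L_0u_f=-fu_f$ gives, with $C$ depending only on the fixed data,
\begin{align*}
\|u_f\|_{H^{2,1}(Q)}\lesssim \|fu_f\|_{L^2(Q)}+C\lesssim \|f\|_\infty\,C_0+C\lesssim 1+\|f\|_{\Cc^\beta}.
\end{align*}
The whole absorption term $-fu_f$ sits on the right-hand side, so the regularity constant does not depend on $f$.

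Next I would set $M:=1+\|f\|_{\Cc^\beta}$ and prove, by induction along the finite chain of levels $s\in\{\beta,\beta-2,\beta-4,\dots\}$ descending to some $s_0\in[0,2)$, that $\|u_f\|_{H^{2+s,1+s/2}(Q)}\lesssim M^{1+s/2}$. For the step at level $s\le\beta$, the regularity estimate for $L_0$ and the product bound \eqref{eq:SobolevBound2} (using that the time-independent multiplier $f$ obeys $\|f\|_{\Cc^{s,s/2}(Q)}\lesssim\|f\|_{\Cc^s(\Oo)}\le\|f\|_{\Cc^\beta}$) yield $\|u_f\|_{H^{2+s,1+s/2}}\lesssim \|f\|_{\Cc^\beta}\|u_f\|_{H^{s,s/2}}+C$. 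When $s\ge 2$ the factor $\|u_f\|_{H^{s,s/2}}=\|u_f\|_{H^{2+(s-2),1+(s-2)/2}}$ is bounded by the inductive hypothesis at level $s-2$ by $M^{s/2}$; when $s\in[0,2)$ it is instead controlled by interpolating $H^{s,s/2}(Q)=[L^2(Q),H^{2,1}(Q)]_{s/2}$, giving $\|u_f\|_{H^{s,s/2}}\lesssim\|u_f\|_{L^2}^{1-s/2}\|u_f\|_{H^{2,1}}^{s/2}\lesssim M^{s/2}$ from the base estimate and the $f$-independence of $\|u_f\|_{L^2}$. In either case $\|u_f\|_{H^{2+s,1+s/2}}\lesssim M\cdot M^{s/2}+C\lesssim M^{1+s/2}$, closing the induction; taking $s=\beta$ gives the claim.

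The main obstacle I anticipate is the exponent bookkeeping: a crude application of the product estimate with full $H^{2,1}$ norms would only produce an integer power of $\|f\|_{\Cc^\beta}$, so it is essential to feed the recursion with the \emph{interpolated} lower-order norm together with the $f$-independent $L^\infty$/$L^2$ control, which is precisely what converts the $\lceil\beta/2\rceil$ applications of parabolic regularity into the single fractional exponent $1+\beta/2$. The remaining care is technical: verifying that the compatibility conditions of \eqref{eq:Consistency} are of high enough order to license the a priori estimate at every level $s\le\beta$ (so that no loss occurs at the corner $\partial\Oo\times\{0\}$), and confirming the reduction $\|f\|_{\Cc^{s,s/2}(Q)}\lesssim\|f\|_{\Cc^s(\Oo)}$ for the time-independent coefficient $f$.
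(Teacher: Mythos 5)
Your argument is correct and delivers the right exponent, but it is organised differently from the paper's proof. You run an ascending bootstrap in steps of two derivatives, $s_0\in[0,2)\to s_0+2\to\dots\to\beta$, applying the parabolic regularity estimate and the product bound \eqref{eq:SobolevBound2} at every rung and interpolating only once, at the bottom, between $L^2$ and $H^{2,1}$; the $f$-independent $L^\infty$ bound then seeds the recursion so that each rung costs exactly one factor of $M=1+\|f\|_{\Cc^\beta}$, giving $M^{1+\beta/2}$ after $\lceil\beta/2\rceil$ steps. The paper instead invokes the Lions--Magenes isomorphism \emph{once}, directly at the top level, to get $\|u_f\|_{H^{2+\beta,1+\beta/2}}\lesssim 1+\|f\|_{\Cc^\beta}\|u_f\|_{H^{\beta,\beta/2}}$, then interpolates $\|u_f\|_{H^{\beta,\beta/2}}\lesssim\|u_f\|_{H^{2+\beta,1+\beta/2}}^{\beta/(2+\beta)}\|u_f\|_{L^2}^{2/(2+\beta)}$ via \eqref{eq:Interpolation} and absorbs the high norm by dividing through by $\|u_f\|_{H^{2+\beta,1+\beta/2}}^{\beta/(2+\beta)}$ (treating the case $\|u_f\|_{H^{2+\beta,1+\beta/2}}<1$ separately), which produces the fractional power $1+\beta/2$ in one stroke. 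The trade-off is exactly where you point it: the paper's absorption trick requires knowing a priori that $u_f\in H^{2+\beta,1+\beta/2}$ (supplied up front by \cite[Theorem 5.3]{Lions1972b} under the compatibility conditions), whereas your induction builds finiteness level by level but obliges you to verify the compatibility conditions of \eqref{eq:Consistency} at every intermediate rung rather than only at the top; both routes rely on the same three ingredients (the isomorphism, \eqref{eq:SobolevBound2} with $\|f\|_{\Cc^{s,s/2}(Q)}\lesssim\|f\|_{\Cc^s(\Oo)}$ for time-independent $f$, and the $f$-uniform sup bound from Proposition \ref{Prop:ForwardEstimate}).
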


\begin{proof}[Proof of Theorem \ref{Thm:Contractionf}]
We start by noting that if $\Phi:\R\to\R$ is a regular link function in the sense of Definition \ref{Def:LinkFunction} then for each integer $m\geq0$ there exists $C>0$ such that, for all $F\in C^m(\Oo)$,
\begin{align}\label{eq:LinkBound}
\|\Phi \circ F\|_{C^m}\leq C(1+\|F\|_{C^m}^m).
\end{align}
See \cite[Lemma 29]{Nickl2020} for proof. With the above bound we can use the conclusion of Theorem \ref{Thm:ContractionF} for the push-forward posterior $\tilde{\Pi}_N(\cdot \g Y^N,Z^N)$. Estimate \eqref{eq:ForwardConvergenceF} directly implies that 
\begin{align*}
\tilde{\Pi}_N(f : \|G(f)-G(f_0)\|_{L^2}>L\delta_N \g Y^N,Z^N)=O_{P_{f_0}^N}(e^{-DN\delta_N^2}),
\end{align*}
as $N\to\infty$. Using the bound \eqref{eq:LinkBound} and the estimate \eqref{eq:NormBoundF} we get, for a sufficiently large $M'>0$,   
\begin{align*}
\tilde{\Pi}_N(f : \|f\|_{C^\beta}>M' \g Y^N,Z^N)
\leq \Pi_{N}\left(F:\|F\|_{C^{\beta}}>M \g Y^N,Z^N\right)
=O_{P_{F_{0}}^{N}}\left(e^{-D N \delta_{N}^{2}}\right). 
\end{align*}
Since $C^\beta \subset \Cc^\beta$ the above estimate is still true if we replace $\|\cdot\|_{C^\beta}$ by $\|\cdot\|_{\Cc^\beta}$.

If $f\in C^\beta$ with $\|f\|_{\Cc^\beta}\leq M'$ Lemma \ref{Prp:NormEstimate} implies that $G(f),G(f_0)\in H^{2+\beta,1+\beta/2}$ and 
\begin{align*}
\|G(f_0)\|_{H^{2+\beta,1+\beta/2}}\lesssim 1+\|f_0\|_{\Cc^\beta}^{1+\beta/2}<\infty, 
\|G(f)\|_{H^{2+\beta,1+\beta/2}}\lesssim 1+\|f\|_{\Cc^\beta}^{1+\beta/2}\leq M''<\infty.
\end{align*}
We will also need the following interpolation inequality.
Let $s\geq0$ and $\theta
\in(0,1)$. Then for all $u\in H^{s,s/2}(Q)$  
\begin{align}\label{eq:Interpolation}
\|u\|_{H^{(1-\theta)s,(1-\theta)s/2}}\lesssim \|u\|_{H^{s,s/2}}^{1-\theta}\|u\|_{L^2}^\theta,
\end{align}
see \cite[Chapter 4, Proposition 2.1]{Lions1972b}.
Combining the above we see that 
\begin{align*}
\|G(f)-G(f_0)\|_{H^{2,1}}
& \lesssim\|G(f)-G(f_0)\|_{H^{2+\beta,1+\beta/2}}^\frac{2}{2+\beta}
\|G(f)-G(f_0)\|_{L^2}^\frac{\beta}{2+\beta}\\
& \lesssim \|G(f)-G(f_0)\|_{L^2}^\frac{\beta}{2+\beta}
\end{align*}

Hence we get, for large enough $L>0$,
\begin{align*}
\tilde{\Pi}_N & (f : \|G(f)-G(f_0)\|_{H^{2,1}} > L\delta_N^\frac{\beta}{2+\beta} \g Y^N,Z^N)\\
& \leq \tilde{\Pi}_N(f : \|G(f)-G(f_0)\|_{L^2} > L\delta_N \g Y^N,Z^N)
+ \tilde{\Pi}_N(f : \|f\|_{\Cc^\beta}>M' \g Y^N,Z^N)\\
& =O_{P_{f_0}^N}(e^{-ND\delta_N^2}), 
\end{align*}
as $N\to\infty$.
Applying the stability estimate of Proposition \ref{Prp:StabilityEstimate} we can then conclude 
\begin{align*}
\tilde{\Pi}_N & (f : \|f-f_0\|_{L^2} > L\delta_N^\frac{\beta}{2+\beta} \g Y^N,Z^N)\\
& \leq \tilde{\Pi}_N (f : \|G(f)-G(f_0)\|_{H^{2,1}} > L'\delta_N^\frac{\beta}{2+\beta} \g Y^N,Z^N)
+ \tilde{\Pi}_N(f : \|f\|_{\Cc^\beta}>M' \g Y^N,Z^N)\\
& =O_{P_{f_0}^N}(e^{-ND\delta_N^2}). 
\end{align*}
\end{proof}

\begin{proof}[Proof of Theorem \ref{Thm:LowerBound}]
The proof follows ideas of \cite{Nickl2020a, Nickl2020} modified for the parabolic problem considered in this paper.
Consider an $s$-regular orthonormal wavelet basis for the Hilbert space $L^2(\R^d)$ given by compactly supported Daubechies tensor wavelet basis functions 
\begin{align*}
\{\Psi_{l,k}\ :\ l\in \N\cup\{-1,0\}, k\in\Z^d\}\quad \Psi_{l,k}=2^\frac{ld}{2}\Psi_{0,k}(2^l\cdot), \ \text{for $\l\geq0$.}
\end{align*}
Note that we can chose the smoothness $s$ of the basis to be as large as required and hence will omit it in what follows. For more details about wavelets see \cite{Daubechies2006, Meyer1995} or \cite[Chapter 4]{Gine2015}.
For $\alpha\in\R$ we have the following wavelet characterisation of $H^\alpha(\R^d)$ norm
\begin{align}\label{eq:HnormWavelet1}
\|f\|_{H^\alpha(\R^d)}^2\simeq \sum_{l,k}2^{2l\alpha}\langle f,\Psi_{l,k}\rangle_{L^2(\R^d)}^2.
\end{align}
We also note that, for $\alpha\geq 0$ and some $C>0$, 
\begin{align*}
f\in C^\alpha(\R^d) \Rightarrow \sup_{l,k}2^{l(\alpha+d/2)}|\langle f,\Psi_{l,k}\rangle_{L^2(\R^d)}|\leq C\|f\|_{C^\alpha(\R^d)}, 
\end{align*}
with the converse being true when $\alpha\not\in\N$. 

We can construct an orthonormal wavelet basis of $L^2(\Oo)$ given by 
\begin{align*}
\Big\{\Psi^\Oo_{l,k}\ :\ k\leq N_l,\ l\in\N\cup\{-1,0\}\Big\}, \quad N_l\in\N,
\end{align*}
that consists of interior wavelets $\Psi^\Oo_{l,k}=\Psi_{l,k}$, which are compactly supported in $\Oo$, and of boundary wavelets $\Psi^\Oo_{l,k}=\Psi^b_{l,k}$, which are an orthonormalised linear combination of those wavelets that have support inside and outside $\Oo$, see \cite[Theorem 2.33]{Triebel2008}.   
Using the above basis any function $f\in L^2(\Oo)$ has orthogonal wavelet series expansion
\begin{align*}
f=\sum_{l}\sum_{k=1}^{N_l}\langle f,\Psi_{l,k}^\Oo\rangle_{L^2(\Oo)}^2\Psi_{l,k}^\Oo. 
\end{align*}

  
We will apply \cite[Theorem 6.3.2]{Gine2015} and its proof which reduces the problem of estimating the lower bound in the whole parameter space into a testing problem in a finite subset $(f_m : m=1,\dots, M)$ of $\tilde{F}_\alpha$. To do this, we need to find a suitable lower bound between the $f_m$'s in the $L^2(\Oo)$ distance, and an appropriate upper bound in the Kullback--Leibler divergence of the laws $P_{f_m}^N$ and $P_{f_0}^N$.

1. We start by showing that $\tilde{F}_\alpha$ contains $\{f_m\ :\ m=0,1,\dots,M\}$, $M\geq1$ such that
\begin{align*} 
\|f_m-f_{m'}\|_{L^2}\gtrsim N^{-\frac{\alpha}{2\alpha+4+d}}
\quad \text{for all $m\not=m'$, }
\end{align*}
that is, $f_m$ are $N^{-\frac{\alpha}{2\alpha+4+d}}$-separated from each other. 

For every $j\in\N$ there exists a small positive constant $c$ and $n_j=c2^{jd}$ many Daubechies wavelets $(\Psi_{jr}\ :\ r=1,\dots,n_j)$ with disjoint compact supports in $\Oo$. Let $b_{m,\cdot}$ be a point in the discrete hypercube $\{-1,1\}^{n_j}$. We define 
\begin{align}\label{eq:hm}
h_m(x)=h_{m,j}(x)=\kappa\sum_{r=1}^{n_j} b_{m,r}2^{-j(\alpha+d/2)}\Psi_{j,r}(x), 
\quad x\in\Oo,
\end{align}
where $\kappa$ can be chosen to be as small as wanted, and $m=0,\dots,M_j$ with  $M_j$ chosen later. Note that $h_m$ is compactly supported in $\Oo$ and has zero extension from $\Oo$ to $\R^d$ with the global H\"older norm being equal to the intrinsic one. For $\alpha\in\N$ we can write 
\begin{align*}
\sup_{x\in\R^d}|D^\alpha h_m(x)| & =\sup_{x\in\Oo} \Big|\kappa\sum_{r=1}^{n_j} b_{m,r}2^{-j(\alpha+d/2)}D^\alpha\Psi_{j,r}(x)\Big|\\
& \leq \kappa\sup_{x\in\Oo}\sum_{r=1}^{n_j} |(D^\alpha\Psi_{0,r})(2^jx)|\leq C\kappa,
\end{align*}
where the last inequality follows from the fact that at any point there are only finitely many $\Psi_{0,r}$ that get a non-zero value. Since the interior wavelets are orthogonal to the boundary wavelets we get for $\alpha\not\in\N$ 
\begin{align*}
\|h_m\|_{C^\alpha}\leq C\sup_{l,k}2^{l(\alpha+d/2)}|\langle h_m,\Psi_{l,k}\rangle_{L^2}|=C\kappa.
\end{align*}
Hence, by choosing $\kappa$ small enough, we see that all $h_m$ are contained in $\{ h\in C^\alpha(\Oo)\ :\ \|h\|_{C^\alpha}\leq1\}$. 

Let $f_0=1$, $h_m$ as in \eqref{eq:hm}, and define functions 
\begin{align*}
f_m=f_0+h_m, \quad m=1,\dots,M_j.
\end{align*}
We then have $\|f_m\|_{C^\alpha}\leq\|f_0\|_{C^\alpha}+C\kappa$, and for $\kappa$ small enough $f_m$ is bounded away from zero. By the Varshamov--Gilbert bound there exists $\{b_{m,\cdot}:m=1,\dots,M_j\}\in\{-1,1\}^{n_j}$, with $M_j\geq3^{\frac{n_j}{4}}$, that are $n_j/8$-separated in the Hamming-distance, that is, for all $m,m'\leq M_j$ and $m\not=m'$
\begin{align*}
\sum_{r=1}^{n_j}(b_{m,r}-b_{m',r})^2\gtrsim n_j    
\end{align*}
see e.g. \cite[Example 3.1.4]{Gine2015}. Setting $2^j\simeq N^{\frac{1}{2\alpha+4+d}}$ we get, for the $f_m$ corresponding to the above separated $b_{m,\cdot}$, that
\begin{align*}
\|f_m-f_{m'}\|_{L^2}^2 & = \|h_m-h_{m'}\|_{L^2}^2\\
& = \kappa^2 2^{-2j(\alpha+d/2)}\Big\|\sum_{r=1}^{n_j}(b_{m,r}-b_{m',r})\Psi_{j,r}\Big\|_{L^2}^2\\
& = \kappa^2 2^{-2j(\alpha+d/2)}\sum_{r=1}^{n_j}(b_{m,r}-b_{m',r})^2\\
& \gtrsim \kappa^2 2^{-2j(\alpha+d/2)}n_j 
\gtrsim N^{\frac{2\alpha}{2\alpha+4+d}}. 
\end{align*}

2. Next we will prove that, for some $\epsilon>0$, 
\begin{align*}
KL(P_{f_m}^N,P_{f_0}^N)\leq \epsilon\log(M),
\end{align*}  
where $KL$ denotes the Kullback--Leibler divergence. 

Using \eqref{eq:Density} we see that  
\begin{align*}
\E_{f_m}^i\Bigg(\log\frac{dP_{f_m}^i(Y_i,Z_i)}{dP_{f_0}^i(Y_i,Z_i)}\Bigg) 
& =\E_{f_m}^i\bigg(\frac{1}{2\sigma^2}\Big((Y_i-u_{f_0}(Z_i))^2-(Y_i-u_{f_m}(Z_i))^2\Big)\bigg)\\
& =\frac{1}{2\sigma^2}\E^{\mu}\big(u_{f_0}(Z_i)^2-2u_{f_0}(Z_i)u_{f_m}(Z_i)+u_{f_m}(Z_i)^2\big)\\
& \simeq \|u_{f_0}-u_{f_m}\|^2_{L^2}. 
\end{align*}
Since $P^N_{f_m}$ is the product measure $\otimes_{i=1}^N P^i_F$ we have $KL(P^N_{f_m},P^N_{f_0})\simeq N\|u_{f_0}-u_{f_m}\|^2_{L^2}$. 
Using Lemma \ref{Lem:ForwardEstimate} and \eqref{eq:HnormWavelet1} we then get 
\begin{align*}  
\|u_{f_m}-u_{f_0}\|_{L^2(Q)}^2 & \lesssim \|f_m-f_0\|_{(H_0^2(\Oo))^*}^2 \\
& \lesssim\|h_m\|_{H^{-2}(\R^d)}^2\\
& =\kappa^2 2^{-2j(\gamma+d/2+2)}\sum_{r=1}^{n_j}1  
\lesssim \kappa^2 N^{-1}n_j.
\end{align*}
By the definition of $M_j$ and choosing $\kappa$ small enough we can conclude that $KL(P_{f_m}^{N},P_{f_0}^{N})\leq\epsilon\log(M_j)$. 

Theorem 6.3.2 from \cite{Gine2015} then states that 
\begin{align*}
N^{\frac{\alpha}{2\alpha+4+d}}\inf_{\hat{f}_N}\sup_{f\in\tilde{\F}_{\alpha}} \E_f^{N}\|\hat{f}_N-f\|_{L^2(\Oo)}
\geq \frac{\sqrt{{M_N}}}{1+\sqrt{{M_N}}}\left(1-2 \epsilon-\sqrt{\frac{8 \epsilon}{\log {M_N}}}\right), 
\end{align*}  
where $M_N=M_j\to\infty$ when $N\to\infty$, and $\epsilon$ can be chosen to be as small as wanted by choosing $\kappa$ small enough.  
We note that the proof of \cite[Theorem 6.3.2]{Gine2015} actually states a slightly stronger result, showing the above lower bound for a certain testing problem, as stated in (6.105). This, combined with the fact that $P_f^N\Big(\|\hat{f}_N-f\|_{L^2(\Oo)}>N^{-\frac{\alpha}{2\alpha+4+d}}\Big)$ can be lower bound by that testing problem, see (6.101) and (6.99) from \cite{Gine2015}, concludes the proof of Theorem \ref{Thm:LowerBound}. 
\end{proof}

\begin{proof}[Proof of Theorem \ref{Thm:TruncatedPrior}]
We denote by $F^J=P_{\mathcal{H}_J}(F)\in\mathcal{H}_J$ the projection 
\eqref{eq:projection} and note that for all $F_0\in H^\alpha_K(\Oo)$
\begin{align*}
\|F_0-F_0^J\|_{(H^2(\Oo))^*}\lesssim 2^{-J(\alpha+2)}
\end{align*}
by (63) from \cite{Giordano2020b}. We also note that $\|F_0^J\|_{C^2}\le\||F_0\|_{C^2}$ by standard properties of wavelet bases and hence, choosing $2^J=N^{1/(2\alpha+4+d)}$, it follows from Proposition \ref{Prop:ForwardEstimate} that 
\begin{align*}
\|\G(F_0)-\G(F_0^J)\|_{L^2(Q)}\lesssim \|F_0-F_0^J\|_{(H^2(\Oo))^*}
\lesssim N^{-\frac{\alpha+2}{2\alpha+4+d}}=\delta_N.
\end{align*}  
Using triangle inequality we then see that for a sufficiently large $c>0$
\begin{align*}
\Pi_N(\|\G(F)-\G(F_0)\|_{L^2(Q)}>c\delta_N)
\geq \Pi_N(\|\G(F)-\G(F_0^J)\|_{L^2(Q)}>c'\delta_N).
\end{align*}
We can then conclude that the results of Theorems \ref{Thm:ContractionF}-\ref{Thm:PosteriorMean} remain valid under the truncated and rescaled Gaussian prior, see \cite{Giordano2020b} Section 3.2.

Following the idea from the proof of Theorem 4.13 \cite{Nickl2020b}
we will next show that the posterior mass concentrates in some $H^\alpha$-balls. 
Define for any $M',Q>0$
\begin{align*}  
\mathcal{A}_N  =\{F=F_1+F_2\in\mathcal{H}_J:\|F_1\|_{(H^2)^*}\leq Q\delta_N,\ \|F_2\|_{\mathcal{H}}\leq M'\}.
\end{align*}
Then, by Theorem 13 with Lemmas 17 and 18 in \cite{Giordano2020b}, we have for $Q,M'$ sufficiently large
\begin{align*}
\Pi_N(F\in\mathcal{A}_N\g Y^N,Z^N) \geq 1-O_{P_{F_0}^N}(e^{DN\delta_N^2}).
\end{align*}
To prove \eqref{eq:PosteriorHaBall} we need to show that $\|F\|_{H^\alpha}\leq M$ for all $F\in\mathcal{A}_N$.

Let $\alpha\geq0$. Then 
\begin{align*}
\tilde{F}=\sum_{\substack{l \leq J\\ r\in\mathcal{R}_l}} F_{l,r}\Psi_{l,r}
=\sum_{\substack{l \leq J\\ r\in\mathcal{R}_l}} 2^{-\alpha l}G_{l,r}\Psi_{l,r}
, \quad G_{l,r}\stackrel{\text { iid }}{\sim}\mathcal{N}(0,1),  
\end{align*}
defines a centred Gaussian probability measure supported on $\tilde{\mathcal{H}}_J=\text{span}\{ \Psi_{l,r}, l\leq J, r\in\mathcal{R}_l\}$ with the RKHS $\tilde{\mathcal{H}}_J$ endowed with norm
\begin{align*}
\|\tilde{F}\|_{\tilde{\mathcal{H}}_J}^2
= \sum_{\substack{l \leq J\\ r\in\mathcal{R}_l}} 2^{2l\alpha}F_{l,r}^2
=\|\tilde{F}\|_{H^\alpha(\R^d)}^2\quad \forall \tilde{F}\in\tilde{\mathcal{H}}_J.
\end{align*}
The random function 
\begin{align*}
F=\chi\tilde{F}
=\sum_{\substack{l \leq J\\ r\in\mathcal{R}_l}} F_{l,r}\chi\Psi_{l,r}
=\sum_{\substack{l \leq J\\ r\in\mathcal{R}_l}} 2^{-\alpha l}G_{l,r}\chi\Psi_{l,r}
, \quad G_{l,r}\stackrel{\text { iid }}{\sim}\mathcal{N}(0,1),    
\end{align*}
then defines the centred Gaussian probability measure $\Pi_J'$, as in \eqref{eq:TruncatedPrior}, supported on $\mathcal{H}_J=\text{span}\{ \chi\Psi_{l,r}, l\leq J, r\in\mathcal{R}_l\}$, with the RKHS norm satisfying
\begin{align*}
\|F \|_{\mathcal{H}_J}^2
= \|\chi\tilde{F} \|_{\mathcal{H}_J}^2
\leq \|\tilde{F}\|_{\tilde{\mathcal{H}}_J}^2
= \sum_{\substack{l \leq J\\ r\in\mathcal{R}_l}} 2^{2l\alpha}F_{l,r}^2 
\quad \forall F\in{\mathcal{H}}_J.
\end{align*}
We also note that for all $\tilde{F}\in\tilde{\mathcal{H}}_J$ there exists $\tilde{F}'\in\tilde{\mathcal{H}}_J$ such that $\chi \tilde{F}=\chi \tilde{F}'$ and 
\begin{align*}
\|\chi\tilde{F}\|_{\mathcal{H}_J}=\|\tilde{F}'\|_{\tilde{\mathcal{H}}_J}.
\end{align*}
Hence, if $F=\chi\tilde{F}$ is an arbitrary element of $\mathcal{H}_J$ we can write 
\begin{align*}
\|F\|_{H^\alpha(\Oo)}
= \|\chi\tilde{F}'\|_{H^\alpha(\Oo)}
\leq \|\tilde{F}'\|_{H^\alpha(\R^d)}
=\|\tilde{F}'\|_{\tilde{\mathcal{H}}_J}
=\|F\|_{\mathcal{H}_J}\quad \forall F\in\mathcal{H}_J. 
\end{align*}

We will next show that
\begin{align*}
\|F\|_{H^\alpha(\Oo)}\lesssim 2^{J(\alpha+2)}\|F\|_{(H^2(\Oo))^*}\quad \forall F\in\mathcal{H}_J.
\end{align*}
Denote for $J'\in\N,\ J'\leq J$, 
\begin{align}\label{eq:projection}
F^{J'}=P_{\mathcal{H}_{J'}}(F)
=\sum_{\substack{l \leq {J'}\\ r\in\mathcal{R}_l}} \langle F,\Psi_{l,r}\rangle\chi\Psi_{l,r} 
\in \mathcal{H}_{J'}.
\end{align} 
Note that for large enough $J_{\min}\in\N$, if $l\geq J_{\min}$ and the support of $\Psi_{l,r}$ intersects $K$, then $\text{supp}(\Psi_{l,r})\subset K'$ and we have $\chi\Psi_{l,r}=\Psi_{l,r}$ for all $l\geq J_{\min}$ and $r\in\mathcal{R}_l$. We can then write 
\begin{align*}
F=F^{J_{\min}}+(F-F^{J_{\min}})
=\sum_{\substack{l \leq J_{\min}\\ r\in\mathcal{R}_l}} F_{l,r}\chi\Psi_{l,r}
+\sum_{\substack{J_{\min}<l \leq J\\ r\in\mathcal{R}_l}} F_{l,r}\Psi_{l,r}.
\end{align*}
Since $\mathcal{H}_{J_{\min}}$ is a fixed finite dimensional subspace we get, by equivalence of norms, that $\|F^{J_{\min}}\|_{H^\alpha}\leq C_{J_{\min}}\|F^{J_{\min}}\|_{(H^2)^*}\leq C_{J_{\min}}\|F\|_{(H^2)^*}$. We also see that $F-F^{J_{\min}}$ is compactly supported on $\Oo$ and hence can be extended by zero to $\R^d$. We can then write 
\begin{align*}
\|F-F^{J_{\min}}\|_{H^\alpha(\Oo)}^2
& = \sum_{\substack{J_{\min}<l \leq J\\ r\in\mathcal{R}_l}} 2^{2l\alpha}F_{l,r}^2\\
& = \sum_{\substack{J_{\min}<l \leq J\\ r\in\mathcal{R}_l}} 2^{2l\alpha+4l}2^{-4l}F_{l,r}^2\\
& \leq 2^{J(2\alpha+4)}\|F-F^{J_{\min}}\|_{(H^2(\Oo))^*}^2\\
& \leq 2^{J(2\alpha+4)}\|F\|_{(H^2(\Oo))^*}^2
\end{align*}

Combining the above we see that, for $F\in\mathcal{H}_J,\ \|F\|_{(H^2)^*}\leq Q\delta_N$ and $2^J=N^{1/(2\alpha+4+d)}$,
\begin{align*}
\|F\|_{H^\alpha(\Oo)}
\leq 2^{J(\alpha+2)}\|F\|_{(H^2(\Oo))^*}
\leq Q N^\frac{\alpha+2}{2\alpha+4+d}\delta_N=Q,
\end{align*}
which concludes the first part of the proof. 

To prove the optimal convergence rate we need to replace the $\|\cdot\|_{C^\beta}$-bounds in in the proof of Theorem \ref{Thm:Contractionf} by $\|\cdot\|_{H^\alpha}$-bounds. To do this we note that if $\alpha>d/2$ and $f\in H^\alpha(\Oo)$ with $\inf_{x\in\Oo}f(x)\geq f_{\min}>0$ we can use the inequality \eqref{eq:SobolevBound1} instead of \eqref{eq:SobolevBound2} in the proof of  Proposition \ref{Prp:NormEstimate} and show that
\begin{align*}
\|G(f)\|_{H^{2+\alpha,1+\alpha/2}(Q)}\leq C(1+\|f\|_{H^\alpha}^{1+\alpha/2}).
\end{align*}
Also, if $\Phi:\R\to\R$ is a regular link function in the sense of Definition \ref{Def:LinkFunction} then for each integer $m\geq0$ there exists $C>0$ such that, for all $F\in H^m(\Oo)$,
\begin{align}\label{eq:LinkBound2}
\|\Phi \circ F\|_{H^m}\leq C(1+\|F\|_{H^m}^m).
\end{align}
See \cite[Lemma 29]{Nickl2020} for proof. The result then follows directly from the proof of Theorem \ref{Thm:Contractionf}.

\end{proof}

\subsection{Proofs of the Propositions}\label{SubSec:PropProofs}
We start by proving some useful properties of the solutions to the inhomogeneous problem \eqref{eq:InhomogeneousSch} below.
Let $f\in C(\overline{\Oo})$ and $f>0$. We denote by $S_f$ the forward operator 
\begin{align*}
S_f:H^{2,1}_{B,0}(Q)\to L^2(Q), \quad S_f(u)=\partial_t u-\frac{1}{2}\Delta_xu+fu,
\end{align*}
where 
\begin{align*}
H^{2,1}_{B,0}(Q)=\{u\in H^{2,1}(Q)\g  \text{$u=0$ on $\Sigma$ and $u(x,0)=0$}\}. 
\end{align*}
Then $S_f$ is an isomorphism with a linear continuous inverse operator  
\begin{align*}
V_f:L^2(Q)\to H^{2,1}_{B,0}(Q), \quad h\mapsto V_f(h).
\end{align*}
That is, for any $h\in L^2(\Oo)$ the inhomogeneous equation 
\begin{align}\label{eq:InhomogeneousSch}
\begin{cases}
\partial_t u-\frac{1}{2}\Delta_xu+fu=h \quad &  \text{on $Q$}\\
u= 0 & \text{on $\Sigma$}\\
u(\cdot,0)=0 & \text{on $\Oo$}
\end{cases} 
\end{align}
has a unique weak solution $w_{f,h}=V_f(h)\in H^{2,1}_{B,0}(Q)$, \cite[Chapter 4, Remark 15.1.]{Lions1972b}. 

\begin{lemma}\label{Lem:ConstantBound}
There exists a constant $C>0$, such that for all $f\in C(\overline{\Oo})$, with $f>0$, and $h:[0,\textbf{T}]\times\overline{\Oo}\to\R$ a continuous function with $t\mapsto h(t,\cdot)\in C([0,\textbf{T}];C(\overline{\Oo}))$ the solution $w_{f,h}$ to \eqref{eq:InhomogeneousSch} satisfies
\begin{align}\label{eq:ConstantBound}
\|w_{f,h}\|_{L^2(Q)}\leq C\|h\|_{L^2(Q)}.
\end{align}
\end{lemma}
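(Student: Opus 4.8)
The plan is to prove the bound by a standard energy (a priori) estimate, the decisive feature being that the resulting constant can be taken independent of $f$ precisely because $f>0$. Since $w:=w_{f,h}\in H^{2,1}_{B,0}(Q)$ is a strong solution, we have $\partial_t w,\Delta_x w\in L^2(Q)$ and $w(\cdot,t)\in H^1_0(\Oo)$ for a.e.\ $t$, so all the manipulations below are justified; in particular the map $t\mapsto \|w(\cdot,t)\|_{L^2(\Oo)}^2$ is absolutely continuous with $\frac{d}{dt}\|w(\cdot,t)\|_{L^2(\Oo)}^2=2\langle \partial_t w(\cdot,t),w(\cdot,t)\rangle_{L^2(\Oo)}$ for a.e.\ $t$, by standard parabolic theory.

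First I would test the equation $\partial_t w-\tfrac12\Delta_x w+fw=h$ with $w$ itself, integrating over $\Oo$ and using $w(\cdot,t)\in H^1_0(\Oo)$ to integrate the Laplacian term by parts. This yields, for a.e.\ $t\in(0,\textbf{T})$,
\[
\frac12\frac{d}{dt}\|w(\cdot,t)\|_{L^2(\Oo)}^2+\frac12\|\nabla w(\cdot,t)\|_{L^2(\Oo)}^2+\int_\Oo f\,w(\cdot,t)^2\,dx=\int_\Oo h(\cdot,t)\,w(\cdot,t)\,dx.
\]
Here is the key point for uniformity in $f$: the gradient term is nonnegative, and the zero-order term $\int_\Oo f\,w^2\,dx$ is nonnegative \emph{exactly because} $f>0$, so both may be discarded without introducing any dependence on $f$. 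Bounding the right-hand side by Young's inequality, $\int_\Oo h\,w\le \tfrac12\|h(\cdot,t)\|_{L^2(\Oo)}^2+\tfrac12\|w(\cdot,t)\|_{L^2(\Oo)}^2$, gives the differential inequality
\[
\frac{d}{dt}\|w(\cdot,t)\|_{L^2(\Oo)}^2\le \|w(\cdot,t)\|_{L^2(\Oo)}^2+\|h(\cdot,t)\|_{L^2(\Oo)}^2 .
\]

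Next I would apply Grönwall's inequality. Since $w(\cdot,0)=0$, integration gives $\|w(\cdot,t)\|_{L^2(\Oo)}^2\le \int_0^t e^{t-s}\|h(\cdot,s)\|_{L^2(\Oo)}^2\,ds\le e^{\textbf{T}}\|h\|_{L^2(Q)}^2$ for every $t\in[0,\textbf{T}]$. Integrating this bound once more over $t\in(0,\textbf{T})$ yields $\|w\|_{L^2(Q)}^2\le \textbf{T}\,e^{\textbf{T}}\|h\|_{L^2(Q)}^2$, which is \eqref{eq:ConstantBound} with $C=\sqrt{\textbf{T}\,e^{\textbf{T}}}$, a constant depending only on $\textbf{T}$ and, crucially, independent of both $f$ and $h$.

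The only genuine obstacle is obtaining a constant that does not degenerate as $f$ varies: a direct appeal to the continuity of the inverse operator $V_f$ would only give an $f$-dependent bound. The energy argument above circumvents this by exploiting the favourable sign of the absorption term, which is available for every admissible $f>0$, so the estimate is uniform over the whole class. The remaining points---the validity of the integration by parts and of the energy identity for the $H^{2,1}$-regular solution---are routine in parabolic theory (via density of smooth functions in $H^{2,1}_{B,0}(Q)$) and require no special care.
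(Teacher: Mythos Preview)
Your proof is correct and takes a genuinely different route from the paper. The paper argues via the variation-of-constants formula $w_{f,h}(x,t)=\int_0^t e^{(t-s)S_f}h(\cdot,s)\,ds$, applies H\"older's inequality to reduce to the bound $\|w_{f,h}\|_{L^2}^2\le \textbf{T}\,\|w_{f,1}\|_\infty\|h\|_{L^2}^2$, and then uses the Feynman--Kac representation $w_{f,1}(x,t)=\E^x\big(\int_0^{\tau_t}e^{-\int_0^s f(X_r)\,dr}ds\big)$ together with $f>0$ to conclude $\|w_{f,1}\|_\infty\le \textbf{T}$, yielding $C=\textbf{T}$. You instead run the standard energy estimate: test the equation with $w$, drop the nonnegative gradient and absorption terms (the latter thanks to $f>0$), and close by Young plus Gr\"onwall, arriving at $C=\sqrt{\textbf{T}\,e^{\textbf{T}}}$. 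Your argument is more elementary and self-contained---it avoids both semigroup machinery and the probabilistic representation, and it works directly for $h\in L^2(Q)$ without the continuity hypothesis. The paper's approach, on the other hand, produces a sharper constant and is in keeping with the Feynman--Kac toolkit used elsewhere in the paper (e.g.\ in the stability and sup-norm estimates for $u_f$). Both hinge on exactly the same structural point you identify: the sign condition $f>0$ is what makes the constant uniform in $f$.
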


\begin{proof}
The solution $w_{f,h}$ to \eqref{eq:InhomogeneousSch} has a presentation  
\begin{align*}
w_{f,h}(x,t)=\int_{0}^{t} e^{(t-s)S_f} h(\cdot,s) d s(x), \quad 0 \leq t \leq \textbf{T}, \ x \in \bar{\Omega},  
\end{align*}
see \cite[Theorem 5.1.11]{Lunardi2012}. Using H\"older's inequality we can then write 
\begin{align*}
\|w_{f,h}\|_{L^2}^2 
& =\int_Q\bigg(\int_{0}^{t} e^{(t-s)S_f} h(\cdot,s) d s(x)\bigg)^2d(x,t)\\
& \leq \int_Q\int_{0}^{t} e^{2(t-s)S_f}d s(x)\int_{0}^{t} h^2(x,s) dsd(x,t) \\
& = \int_Q\frac{1}{2}\int_{0}^{2t} e^{(2t-\tilde{s})S_f}d \tilde{s}(x)\int_{0}^{t} h^2( x,s)dsd(x,t) \\
& \leq \int_\Oo\int_{0}^{\textbf{T}} \frac{1}{2}|w_{f,1}(x,2t)|dt\int_{0}^{\textbf{T}} h^2(x,s) dsdx\\
& \leq \textbf{T}\|w_{f,1}\|_\infty\|h\|_{L^2}^2. 
\end{align*}
The solution to \eqref{eq:InhomogeneousSch} has also a probabilistic representation in terms of the Feynman-Kac formula 
\begin{align*}
w_{f,h}(x,t)=\E^x\Bigg(\int_0^{\tau_t}h(X_s,t-s)e^{-\int_0^s f(X_r)dr}ds\Bigg),
\end{align*}
where $(X_s:s\geq0)$ is a $d$-dimensional Brownian motion started at $x\in\Oo$, with exit time $\tau_\Oo$ satisfying $\sup_{x\in\Oo}\E^x(\tau_\Oo)<\infty$, and $\tau_t=\min\{\tau_\Oo,t\}$.
Hence we get a bound  
\begin{align*}
\|w_{f,1}\|_\infty=\sup_{(x,t)\in Q}\Bigg|\E^x\Bigg(\int_0^{\tau_t}e^{-\int_0^s f(X_r)dr}ds\Bigg)\Bigg|\leq \sup_{(x,t)\in Q}\E^x(\tau_t)\leq \textbf{T}. 
\end{align*}  
\end{proof}

Lemma \ref{Lem:ConstantBound} can be used to prove the following stronger regularity estimates. 

\begin{lemma}\label{Lem:H21Estimates}
Let $f,h$ be as in Lemma \ref{Lem:ConstantBound}, and $w_{f,h}\in H^{2,1}_{B,0}(Q)$ be the unique solution to \eqref{eq:InhomogeneousSch}.
Then there exists a constant $C>0$ such that 
\begin{align}
\|w_{f,h}\|_{H^{2,1}(Q)} & \leq C(1+\|f\|_\infty)\|h\|_{L^2(Q)}\label{eq:H21Estimate}\\
\|w_{f,h}\|_{L^2(Q)} & \leq C(1+\|f\|_\infty)\|h\|_{(H^{2,1}_{C,0}(Q))^*},
\end{align}
where 
\begin{align*}
H^{2,1}_{C,0}(Q)=\{u\in H^{2,1}(Q)\g  \text{$u=0$ on $\Sigma$ and $u(x,\textbf{T})=0$}\}. 
\end{align*}
\end{lemma}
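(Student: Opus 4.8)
The plan is to prove the two bounds in turn, deriving the dual estimate from \eqref{eq:H21Estimate} by duality against a backward adjoint problem.

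For \eqref{eq:H21Estimate} I would combine maximal parabolic regularity for the free heat operator with the $L^2$ bound from Lemma \ref{Lem:ConstantBound}. Writing $S_0u=\partial_tu-\tfrac12\Delta_xu$, classical maximal $L^2$-regularity for the heat equation on $Q$ with zero lateral and initial data furnishes a constant $C_0$, \emph{independent of} $f$, with
\[
\|u\|_{H^{2,1}(Q)}\le C_0\|S_0u\|_{L^2(Q)},\qquad u\in H^{2,1}_{B,0}(Q).
\]
Applying this to $u=w_{f,h}$ and using $S_0w_{f,h}=h-fw_{f,h}$ gives $\|w_{f,h}\|_{H^{2,1}}\le C_0(\|h\|_{L^2}+\|f\|_\infty\|w_{f,h}\|_{L^2})$. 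Inserting the bound $\|w_{f,h}\|_{L^2}\le C\|h\|_{L^2}$ from Lemma \ref{Lem:ConstantBound} to absorb the lower-order term yields \eqref{eq:H21Estimate} with a constant depending linearly on $\|f\|_\infty$.

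For the dual estimate I would argue by duality. Writing $\|w_{f,h}\|_{L^2}=\sup_{\|\phi\|_{L^2}=1}\langle w_{f,h},\phi\rangle_{L^2(Q)}$, for each such $\phi$ I let $v=v_\phi\in H^{2,1}_{C,0}(Q)$ solve the adjoint (backward) problem
\[
-\partial_tv-\tfrac12\Delta_xv+fv=\phi\ \text{on }Q,\qquad v=0\ \text{on }\Sigma,\qquad v(\cdot,\textbf{T})=0.
\]
Integrating by parts in space and time, and using that $w_{f,h}$ vanishes on $\Sigma$ and at $t=0$ while $v$ vanishes on $\Sigma$ and at $t=\textbf{T}$, every boundary and endpoint term cancels and I obtain the identity $\langle w_{f,h},\phi\rangle=\langle S_fw_{f,h},v\rangle=\langle h,v\rangle$. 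Since $h\in L^2(Q)\hookrightarrow(H^{2,1}_{C,0}(Q))^*$ and $v\in H^{2,1}_{C,0}(Q)$, this gives $|\langle w_{f,h},\phi\rangle|\le\|h\|_{(H^{2,1}_{C,0}(Q))^*}\|v\|_{H^{2,1}(Q)}$.

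It then remains to control $\|v\|_{H^{2,1}}$ by $\|\phi\|_{L^2}$ with the correct $f$-dependence. The key observation is that the time reversal $\tau=\textbf{T}-t$ turns the backward problem into a forward problem of exactly the same form, $\partial_\tau\tilde v-\tfrac12\Delta_x\tilde v+f\tilde v=\tilde\phi$ with zero lateral and initial data — here one uses crucially that $f$ is time-independent — so Lemma \ref{Lem:ConstantBound} and the already established \eqref{eq:H21Estimate} apply to $\tilde v$ and yield $\|v\|_{H^{2,1}}=\|\tilde v\|_{H^{2,1}}\le C(1+\|f\|_\infty)\|\phi\|_{L^2}$. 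Taking the supremum over $\|\phi\|_{L^2}=1$ completes the second estimate. The step I expect to require the most care is the integration-by-parts identity: one must know that the adjoint solution $v$ is genuinely in $H^{2,1}_{C,0}(Q)$ (which is what the time reversal together with \eqref{eq:H21Estimate} provides) and verify, via a density argument in $H^{2,1}_{B,0}(Q)$ and $H^{2,1}_{C,0}(Q)$, that the matching of the homogeneous lateral conditions and of the complementary endpoint conditions at $t=0$ and $t=\textbf{T}$ makes all boundary contributions vanish.
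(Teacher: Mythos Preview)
Your proposal is correct and follows essentially the same route as the paper: the first estimate via maximal regularity for $S_0$ combined with Lemma~\ref{Lem:ConstantBound}, and the second via duality against the backward adjoint problem, using that $\|v\|_{H^{2,1}}\le C(1+\|f\|_\infty)\|\phi\|_{L^2}$. The only minor variant is in how this last bound is obtained: you use the time reversal $\tau=\textbf{T}-t$ (legitimate since $f$ is time-independent) to reduce directly to the forward problem, whereas the paper stays in the backward formulation and first derives the $L^2$ bound for $v$ by the self-referential duality $\|v\|_{L^2}^2=\langle v,V_f^*\phi\rangle\le\|V_f v\|_{L^2}\|\phi\|_{L^2}\le \textbf{T}\|v\|_{L^2}\|\phi\|_{L^2}$, then upgrades to $H^{2,1}$ exactly as in the first part.
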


\begin{proof}
Using the fact that $S_0:H^{2,1}_{B,0}(Q)\to L^2(Q)$ is an isomorphism and Lemma \ref{Lem:ConstantBound} we get 
\begin{align*}
\|w_{f,h}\|_{H^{2,1}} 
& \leq C\Big\|\Big(\partial_t-\frac{1}{2}\Delta_x\Big)w_{f,h}\Big\|_{L^2}\\
& \leq C(\|S_f(w_{f,h})\|_{L^2}+\|fw_{f,h}\|_{L^2})\\
& \leq C(\|h\|_{L^2}+\|f\|_\infty\|w_{f,h}\|_{L^2})\\
& \leq C(1+\textbf{T}\|f\|_\infty)\|h\|_{L^2}
\end{align*}
which proves the first estimate. 

Denote by $S^*_f=-\partial_t-\frac{1}{2}\Delta_x+f$ the adjoint of $S_f$ and by $H^{2r,r}_{0,0}(Q)$ the closure of $C^\infty_c(Q)$ in $H^{2r,r}(Q)$.
Let $\varphi\in H^{2(r-1), r-1}_{0,0}(Q)$, with some $r\geq1$. Then the adjoint problem 
\begin{align}\label{Eq:Adjoint}
\begin{cases}
S^*_f(u)=\varphi \quad &  \text{on $Q$}\\
u= 0 & \text{on $\Sigma$}\\
u(\cdot,\textbf{T})=0 & \text{on $\Oo$}
\end{cases} 
\end{align}
has a solution $v^*_{f,\varphi}\in X^r(Q)$, where  
\begin{align*}
X^r(Q)=\{u\in H^{2r,r}(Q)\ :\ \text{$u=0$ on $\Sigma$, $u(x,\textbf{T})=0$ and $S^*_f(u)\in H^{2(r-1), r-1}_{0,0}(Q)$}\}.
\end{align*}
The adjoint operator $S^*_f$ is an isomorphism of $X^r(Q)$ onto $H^{2(r-1), r-1}_{0,0}(Q)$, $r\geq1$, and we denote the inverse operator by $V^*_f:H^{2(r-1), r-1}_{0,0}(Q)\to X^r(Q)$, 
\cite[Section 7]{Lions1972b}. 

We start by showing that the estimate \eqref{eq:ConstantBound} holds also for the solution to the adjoint problem \eqref{Eq:Adjoint}. Let $\varphi\in C^\infty_c(Q)$ and $\varphi\not=0$. Then $v^*_{f,\varphi}\not=0$, and we get 
\begin{align*}
\|v^*_{f,\varphi}\|_{L^2}^2
& = \int v^*_{f,\varphi}V^*_f(\varphi)\\
& \leq \|V_f(v^*_{f,\varphi})\|_{L^2}\|\varphi\|_{L^2}\\
& = \|w_{f,v^*_{f,\varphi}}\|_{L^2}\|\varphi\|_{L^2}\\
& \leq \textbf{T}\|v^*_{f,\varphi}\|_{L^2}\|\varphi\|_{L^2},
\end{align*}
where the last inequality follows from Lemma \ref{Lem:ConstantBound}. From the above we see that $\|v^*_{f,\varphi}\|_{L^2}\leq \textbf{T}\|\varphi\|_{L^2}$. 

We can then show, with a similar proof to that of \eqref{eq:H21Estimate}, that 
\begin{align*}
\|v^*_{f,\varphi}\|_{H^{2,1}}
\leq (1+\textbf{T}\|f\|_\infty)\|\varphi\|_{L^2},
\end{align*}
and conclude 
\begin{align*}
\|w_{f,h}\|_{L^2}
& =\sup_{\varphi\in C^\infty_c,\|\varphi\|_{L^2}\leq1}\Big|\int_Q w_{f,h}\varphi\Big|\\
& =\sup_{\varphi\in C^\infty_c,\|\varphi\|_{L^2}\leq1}\Big|\int_Q w_{f,h}S_f^*(V^*_f(\varphi))\Big|\\
& =\sup_{\varphi\in C^\infty_c,\|\varphi\|_{L^2}\leq1}\Big|\int_Q S_f(w_{f,h}) V^*_f(\varphi)\Big|\\
&\leq \sup_{\varphi\in C^\infty_c,\|\varphi\|_{L^2}\leq1} \|v^*_{f,\varphi}\|_{H^{2,1}}\|h\|_{(H^{2,1}_{C,0})^*}\\
& \leq  (1+\textbf{T}\|f\|_\infty)\|h\|_{(H^{2,1}_{C,0})^*}.
\end{align*}
\end{proof}

We now turn to the properties of the forward map $G$. The following norm estimate for the $\mathcal{C}^{2,1}$-H\"older--Zygmund norm of $G(f)=u_f$ is needed for the proof of the Proposition \ref{Prop:ForwardEstimate}.

\begin{lemma}\label{Lem:C21Estimate}
Let $g,u_0$ and $f$ be as in Section \ref{SubSec:BayesApp}. Then for $u_f$, the unique solution to \eqref{eq:SchrodingerHomogeneous}, there exists a constant $C>0$ such that 
\begin{align*} 
\|u_f\|_{\Cc^{2,1}(Q)} \leq C\big(1+\|f\|_\infty\big)\big(\|g\|_{\Cc^{2,1}(\Sigma)}+\|u_0\|_{\Cc^2(\Oo)}\big)
\end{align*}
\end{lemma}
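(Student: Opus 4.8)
The plan is to isolate the zeroth-order term $fu_f$ and treat $u_f$ as a solution of the \emph{constant-coefficient} heat equation, so that the only dependence on $f$ enters through a lower-order perturbation and is therefore controlled by $\|f\|_\infty$ alone. Writing $S_0=\partial_t-\frac12\Delta_x$ for the constant-coefficient parabolic operator already introduced above, the homogeneous equation $L_f(u_f)=0$ reads
\begin{align*}
S_0 u_f=-fu_f \ \text{ on }Q,\quad u_f=g\ \text{ on }\Sigma,\quad u_f(\cdot,0)=u_0\ \text{ on }\Oo,
\end{align*}
so that $u_f$ solves an inhomogeneous heat equation with right-hand side $h:=-fu_f$ and the given boundary and initial data.

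First I would invoke the parabolic Schauder (maximal-regularity) estimate for $S_0$ in the H\"older--Zygmund scale. Since $S_0$ has constant coefficients, this estimate holds with a universal constant independent of $f$, and in the endpoint Zygmund case reads
\begin{align*}
\|u_f\|_{\Cc^{2,1}(Q)}\lesssim \|h\|_{\Cc^{0,0}(Q)}+\|g\|_{\Cc^{2,1}(\Sigma)}+\|u_0\|_{\Cc^2(\Oo)};
\end{align*}
see \cite[Chapter 7.2]{Amann1995} (or \cite{Lunardi2012,Krylov1996}), the compatibility of $g,u_0$ on $\partial\Oo\times\{0\}$ being guaranteed by the assumptions of Section \ref{SubSec:BayesApp}. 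To bound the right-hand side I would use the continuous embedding $L^\infty\hookrightarrow\Cc^{0,0}=B^0_{\infty,\infty}$, which requires no H\"older regularity of $f$ whatsoever, giving
\begin{align*}
\|h\|_{\Cc^{0,0}(Q)}=\|fu_f\|_{\Cc^{0,0}(Q)}\lesssim\|fu_f\|_\infty\leq\|f\|_\infty\|u_f\|_\infty.
\end{align*}

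To close the estimate I would control $\|u_f\|_\infty$ by the maximum principle; equivalently this is the uniform bound $\|u_f\|_\infty\leq C(\|g\|_\infty+\|u_0\|_\infty)$ furnished by the Feynman--Kac representation \eqref{eq:FeynmanKacHomogeneous}, exactly as in the second part of Proposition \ref{Prop:ForwardEstimate}. Since $\|g\|_\infty\lesssim\|g\|_{\Cc^{2,1}(\Sigma)}$ and $\|u_0\|_\infty\lesssim\|u_0\|_{\Cc^2(\Oo)}$, substituting the last two displays into the Schauder estimate yields
\begin{align*}
\|u_f\|_{\Cc^{2,1}(Q)}\lesssim \|f\|_\infty(\|g\|_\infty+\|u_0\|_\infty)+\|g\|_{\Cc^{2,1}(\Sigma)}+\|u_0\|_{\Cc^2(\Oo)}\lesssim(1+\|f\|_\infty)(\|g\|_{\Cc^{2,1}(\Sigma)}+\|u_0\|_{\Cc^2(\Oo)}),
\end{align*}
which is the claim. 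The main obstacle I anticipate is the endpoint ($\theta=0$) Schauder estimate: the classical parabolic H\"older theory is usually stated for $C^{2+\theta,1+\theta/2}$ with $\theta\in(0,1)$, so one must work in the Besov--H\"older (Zygmund) framework where the $S_0$-maximal-regularity estimate remains valid at integer smoothness, and one must check that the inhomogeneous boundary and initial data enter with precisely the stated norms under the compatibility conditions. The deliberate choice to keep $f$ in the right-hand side rather than in the principal part is exactly what secures the \emph{linear} dependence on $\|f\|_\infty$.
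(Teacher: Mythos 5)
Your proposal is correct and follows essentially the same route as the paper: the paper likewise rewrites the equation as $(\partial_t-\tfrac12\Delta_x)u_f=-fu_f$, invokes the isomorphism of the constant-coefficient heat operator (with trace maps) from $\Cc^{2,1}(Q)$ onto the compatible subspace of $C(Q)\times\Cc^{2,1}(\Sigma)\times\Cc^{2}(\Oo)$, and bounds $\|fu_f\|_\infty\leq\|f\|_\infty\|u_f\|_\infty$ with $\|u_f\|_\infty\leq\|u_0\|_\infty+\|g\|_\infty$ from the Feynman--Kac representation. The only cosmetic difference is that you route the right-hand side through $\Cc^{0,0}$ via the embedding $L^\infty\hookrightarrow B^0_{\infty,\infty}$, whereas the paper states the isomorphism with $C(Q)$ directly as the source space.
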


\begin{proof}
We start by noticing that since $f>0$
\begin{align}\label{eq:SupEstimate}
\begin{split}
\|u_f\|_\infty & =\sup_{(x,t)\in Q}\bigg| \E^x\left(u_0(X_t)\chi_{\tau_t=t}\exp\left(-\int_0^tf(X_s)ds\right)\right)\\
&\quad\quad\quad+\E^x\left(g(X_{\tau_\Oo},\tau_\Oo)\chi_{\tau_t<t}\exp\left(-\int_0^{\tau_\Oo} f(X_s)ds\right)\right)\bigg|\\
& \leq \|u_0\|_\infty+\|g\|_\infty. 
\end{split}
\end{align}  
We will also need  the isomorphism, see e.g. \cite{Amann2010},
\begin{align*}
\Big(\partial_t-\frac{1}{2}\Delta_x,\text{tr}_{|\Sigma},\text{tr}_{|\Oo}\Big)
& :\Cc^{2,1}(Q) \to \mathcal{X}\\
u & \mapsto \Big(\partial_tu-\frac{1}{2}\Delta_xu,\text{tr}_{|\Sigma}(u),u(\cdot,0)\Big),
\end{align*}
where $\mathcal{X}$ is the subspace of $C(Q)\times\Cc^{2,1}(\Sigma)\times\Cc^{2}(\Oo)$ of the elements $(h,g,u_0)$ satisfying the consistency conditions 
\begin{align*}
g(x,0)=u_0(x)
\quad\text{and} \quad
\partial_t g(x,0)-\frac{1}{2}\Delta_x u_0(x)=h(x,0),  
\quad x\in\partial\Oo.
\end{align*}
Using the above we get 
\begin{align*}  
\|u_f\|_{\Cc^{2,1}(Q)}
& \leq C\bigg(\Big\|\Big(\partial_t-\frac{1}{2}\Delta_x\Big)u_f\Big\|_{\infty}+\|g\|_{\Cc^{2,1}(\Sigma)}+\|u_0\|_{\Cc^2(\Oo)}\bigg)\\
& = C\big(\|fu_f\|_{\infty}+\|g\|_{\Cc^{2,1}(\Sigma)}+\|u_0\|_{\Cc^2(\Oo)}\big)\\
& \leq C\big(\|f\|_\infty \|u_f\|_{\infty}+\|g\|_{\Cc^{2,1}(\Sigma)}+\|u_0\|_{\Cc^2(\Oo)}\big)\\
& \leq C\big(1+\|f\|_\infty\big)\big(\|g\|_{\Cc^{2,1}(\Sigma)}+\|u_0\|_{\Cc^2(\Oo)}\big)\\
\end{align*}
\end{proof}

Using the above Lemmas we can show that the forward operator $G$ satisfies the following Lipschitz condition. 

\begin{lemma}\label{Lem:ForwardEstimate}
Let $g,u_0$ and $f$ be as in Lemma \ref{eq:SupEstimate}. Then, for the unique solution $u_f$ to \eqref{eq:SchrodingerHomogeneous}, there exists a constant $C>0$ such that 
\begin{align*}
\|u_{f_1}-u_{f_2}\|_{L^2(Q)} \leq C(1+\|f_1\|_\infty)(1+\|f_2\|_\infty)\|f_1-f_2\|_{(H^2_0(\Oo))^*}.
\end{align*}
\end{lemma}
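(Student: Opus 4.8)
The plan is to reduce the difference $u_{f_1}-u_{f_2}$ to the inhomogeneous problem \eqref{eq:InhomogeneousSch}, apply the mapping properties of its solution operator from Lemma \ref{Lem:H21Estimates}, and then run a multiplier estimate in negative-order spaces. First I would set $w=u_{f_1}-u_{f_2}$. Since $u_{f_1}$ and $u_{f_2}$ solve \eqref{eq:SchrodingerHomogeneous} with the same boundary datum $g$ and initial datum $u_0$, the difference satisfies $w=0$ on $\Sigma$ and $w(\cdot,0)=0$. Subtracting the two equations and using $\partial_t u_{f_2}-\tfrac12\Delta_x u_{f_2}=-f_2u_{f_2}$ gives
\[
\partial_t w-\tfrac12\Delta_x w+f_1 w=(f_2-f_1)u_{f_2}\quad\text{on }Q,
\]
so that $w=V_{f_1}\big((f_2-f_1)u_{f_2}\big)$ is the weak solution of \eqref{eq:InhomogeneousSch} with $f=f_1$ and right-hand side $h=(f_2-f_1)u_{f_2}$. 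The second estimate of Lemma \ref{Lem:H21Estimates} then yields
\[
\|w\|_{L^2(Q)}\leq C(1+\|f_1\|_\infty)\,\|(f_2-f_1)u_{f_2}\|_{(H^{2,1}_{C,0}(Q))^*}.
\]

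The remaining, and main, step is the multiplier estimate $\|(f_2-f_1)u_{f_2}\|_{(H^{2,1}_{C,0}(Q))^*}\lesssim(1+\|f_2\|_\infty)\|f_1-f_2\|_{(H^2_0(\Oo))^*}$. Here I would use that multiplication by $u_{f_2}$ is bounded on the test-function space: by the parabolic product bound \eqref{eq:SobolevBound2} with $s=2$, multiplication by $u_{f_2}\in\Cc^{2,1}(Q)$ maps $H^{2,1}(Q)$ into itself with operator norm $\lesssim\|u_{f_2}\|_{\Cc^{2,1}(Q)}$, and it clearly preserves the homogeneous conditions $\varphi=0$ on $\Sigma$ and $\varphi(\cdot,\textbf{T})=0$ defining $H^{2,1}_{C,0}(Q)$. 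Dualising, multiplication by $u_{f_2}$ is bounded on $(H^{2,1}_{C,0}(Q))^*$, which reduces matters to controlling the space-time negative norm of the time-independent factor $f_1-f_2$ by its spatial negative norm. Testing against $\varphi\in H^{2,1}_{C,0}(Q)$ and integrating out time, $\int_Q(f_1-f_2)\varphi=\int_\Oo(f_1-f_2)\Phi$ with $\Phi=\int_0^{\textbf{T}}\varphi(\cdot,t)\,dt$ and $\|\Phi\|_{H^2(\Oo)}\lesssim\|\varphi\|_{H^{2,1}(Q)}$ by Cauchy–Schwarz. Finally I would bound $\|u_{f_2}\|_{\Cc^{2,1}(Q)}\lesssim(1+\|f_2\|_\infty)$ by Lemma \ref{Lem:C21Estimate} (the fixed data $g,u_0$ contributing only a constant), and combine the three displays.

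The delicate point, which I expect to be the hard part, is the boundary behaviour in this last reduction. The time-integrated test function $\Phi$ vanishes on $\partial\Oo$ but its normal derivative need not, so a priori $\Phi$ lies only in $H^2(\Oo)\cap H^1_0(\Oo)$ rather than in $H^2_0(\Oo)$; equivalently, the product $u_{f_2}\varphi$ need not lie in $H^2_0$ per time slice even though it vanishes on $\Sigma$. I would handle this either by carrying out the duality against $H^2(\Oo)\cap H^1_0(\Oo)$, which still suffices downstream (its dual norm dominates $\|\cdot\|_{(H^2_0)^*}$ and is dominated by the $(H^2)^*$-norm actually used in Proposition \ref{Prop:ForwardEstimate}), or, where the sharper $(H^2_0)^*$ statement is genuinely needed, by exploiting that the relevant differences $f_1-f_2$ are compactly supported in $\Oo$, in which case all of these negative norms coincide with $\|\cdot\|_{H^{-2}(\R^d)}$ after extension by zero and the distinction disappears.
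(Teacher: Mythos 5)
Your proposal is correct and follows essentially the same route as the paper: reduce $w=u_{f_1}-u_{f_2}$ to the inhomogeneous problem and apply the dual estimate of Lemma \ref{Lem:H21Estimates}, then control $\|(f_1-f_2)u_{f_2}\|_{(H^{2,1}_{C,0})^*}$ via the multiplier bound \eqref{eq:SobolevBound2} together with Lemma \ref{Lem:C21Estimate}, and finally integrate out time with Cauchy--Schwarz to pass to the spatial negative norm. Your closing remark about the time-integrated test function $\Phi$ lying only in $H^2(\Oo)\cap H^1_0(\Oo)$ rather than $H^2_0(\Oo)$ is a genuine subtlety that the paper's proof passes over silently, and your proposed fixes (dualising against $H^2\cap H^1_0$, or using compact support of $f_1-f_2$) are both adequate for how the lemma is used downstream.
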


\begin{proof}
Let $u_{f_i}$, $i=1,2$, be solutions to \eqref{eq:SchrodingerHomogeneous}. We notice that $w=u_{f_1}-u_{f_2}$ solves the inhomogeneous equation $S_{f_1}(w)=(f_1-f_2)u_{f_2}$ on $Q$ and $w=0$ on the boundary $\Sigma\times\overline{\Oo}$. Using Lemmas  \ref{Lem:H21Estimates} and \ref{Lem:C21Estimate} we see that 
\begin{align*}
\|u_{f_1}-u_{f_2}\|_{L^2} 
& = \|w_{f_1,(f_1-f_2)u_{f_2}}\|_{L^2}\\
& \leq C(1+\textbf{T}\|f_1\|_\infty)\|(f_1-f_2)u_{f_2}\|_{(H^{2,1}_{C,0})^*}\\
& \leq C(1+\textbf{T}\|f_1\|_\infty)\sup_{\varphi\in H^{2,1}_{C,0},\|\varphi\|_{H^{2,1}}\leq1}
\Big|\int_Q(f_1-f_2)u_{f_2}\varphi\Big|\\
& \leq C(1+\textbf{T}\|f_1\|_\infty)\sup_{\varphi\in H^{2,1}_{C,0},\|\varphi\|_{H^{2,1}}\leq1}
\|u_{f_2}\varphi\|_{H^{2,1}}\|f_1-f_2\|_{(H^{2,1}_{C,0})^*}\\
& \leq C(1+\textbf{T}\|f_1\|_\infty)\|u_{f_2}\|_{\Cc^{2,1}}\|f_1-f_2\|_{(H^{2,1}_{C,0})^*}\\
& \leq C(1+\|f_1\|_\infty)(1+\|f_2\|_\infty)\|f_1-f_2\|_{(H^{2,1}_{C,0})^*}.
\end{align*}
Furthermore
\begin{align*}
\|f_1-f_2\|_{(H^{2,1}_{C,0})^*} 
& = \sup_{\varphi\in H^{2,1}_{C,0},\|\varphi\|_{H^{2,1}}\leq1} \Big|\int_Q(f_1(x)-f_2(x))\varphi(x,t)d(x,t)\Big|\\
& = \sup_{\varphi\in H^{2,1}_{C,0},\|\varphi\|_{H^{2,1}}\leq1} \Big\|\int_0^\textbf{T} \varphi(\cdot,t)dt\Big\|_{H^2}\|f_1-f_2\|_{(H^2_0)^*}, 
\end{align*}
where 
\begin{align*}
\Big\|\int_0^\textbf{T} \varphi(\cdot,t)dt\Big\|_{H^2(\Oo)}^2   
& = \sum_{|\alpha|\leq2} \Big\|D^\alpha_x\int_0^\textbf{T} \varphi(\cdot,t)dt\Big\|_{L^2(\Oo)}^2\\
& = \sum_{|\alpha|\leq2} \int_\Oo\Big|\int_0^\textbf{T}  D^\alpha_x\varphi(\cdot,t)dt\Big|^2dx\\
& \leq \sum_{|\alpha|\leq2} \int_\Oo \textbf{T}\int_0^\textbf{T}  |D^\alpha_x\varphi(\cdot,t)|^2dtdx\\
& \leq \textbf{T} \int_0^\textbf{T}\|\varphi(\cdot,t)\|_{H^2}^2dt\\
& \leq \textbf{T}\|\varphi(\cdot,t)\|_{H^{2,1}}^2
\end{align*}
which concludes the proof. 
\end{proof}

We can finally proceed to prove Propositions \ref{Prop:ForwardEstimate}-\ref{Prp:NormEstimate}.

\begin{proof}[Proof of Proposition \ref{Prop:ForwardEstimate}]
Note that if $\Phi:\R\to\R$ is a regular link function in the sence of Definition \ref{Def:LinkFunction} then there exists $C>0$ such that for all $F\in L^\infty(\Oo)$
\begin{align*}
\|\Phi \circ F\|_\infty\leq C(1+\|F\|_\infty).
\end{align*}
Also, for all $F_1,F_2\in C^2(\Oo)$  there exists $C>0$ such that 
\begin{align*}
\|\Phi\circ F_1-\Phi \circ F_2\|_{(H^2)^*}\leq C(1+\|F_1\|_{C^2}^2+\|F_2\|_{C^2}^2)\|F_1-F_2\|_{(H^2)^*}. 
\end{align*}
See \cite[Lemma 29]{Nickl2020} for a proof.     
Using Lemma \ref{Lem:ForwardEstimate} and the above estimates we can then write 
\begin{align*}
\|\G(F_1)-\G(F_2)\|_{L^2(Q)}
& \leq C(1+\|f_1\|_\infty)(1+\|f_2\|_\infty)\|f_1-f_2\|_{(H^2_0(\Oo))^*}\\
& \leq C(1+\|F_1\|^2_\infty\vee\|F_2\|_\infty^2)(1+\|F_1\|^2_{C^2}\vee\|F_2\|_{C^2}^2)\|F_1-F_2\|_{(H^2(\Oo))^*}\\
& \leq C(1+\|F_1\|_{C^2(\Oo)}^4\vee\|F_2\|_{C^2(\Oo)}^4)\|F_1-F_2\|_{(H^2(\Oo))^*}\\
\end{align*}
\end{proof}

\begin{proof}[Proof of Proposition \ref{Prp:StabilityEstimate}]
Applying Jensen's inequality we see that 
\begin{align*}
\inf_{(x,t)\in Q}u_f(x,t) & =\inf_{(x,t)\in Q}\bigg( \E^x\left(u_0(X_t)\chi_{\tau_t=t}e^{-\int_0^tf(X_s)ds}\right)\\
&\quad\quad\quad+\E^x\left(g(X_{\tau_\Oo},\tau_\Oo)\chi_{\tau_t<t}e^{-\int_0^{\tau_{\Oo}} f(X_s)ds}\right)\bigg)\\
& \geq u_{0,\min}e^{-\textbf{T}\|f\|_\infty}+g_{\min}\inf_{x\in\Oo}e^{-\|f\|_\infty\E^x(\tau_\Oo)}\\
& \geq (u_{0,\min}+g_{\min})e^{-C_\textbf{T}\|f\|_\infty}>0
\end{align*}
where $C_\textbf{T}=\max\{\textbf{T},\E^x(\tau_\Oo)\}$.

Since $f\geq f_{\min}>0$ the solution $u_f$ is positive and we can write $f(x,t)=\frac{(\frac{1}{2}\Delta_x-\partial_t)u_f(x,t)}{u_f(x,t)}$. Note that $f(x,t)$ is a constant in $t$. We can then write 
\begin{align*}
\|f_1-f_2\|_{L^2(Q)}
& = \Big\|\frac{(\frac{1}{2}\Delta_x-\partial_t)u_{f_1}}{u_{f_1}}-\frac{(\frac{1}{2}\Delta_x-\partial_t)u_{f_2}}{u_{f_2}}\Big\|_{L^2(Q)}\\
& \leq \Big\|\frac{(\frac{1}{2}\Delta_x-\partial_t)(u_{f_1}-u_{f_2})}{u_{f_1}}\Big\|_{L^2(Q)}
+ \Big\|(u_{f_1}^{-1}-u_{f_2}^{-1})\Big(\frac{1}{2}\Delta_x-\partial_t\Big)u_{f_2}\Big\|_{L^2}\\
& \lesssim (\inf_{(x,t)\in Q}|u_{f_1}(x,t)|)^{-1}\|u_{f_1}-u_{f_2}\|_{H^{2,1}(Q)}
+ \|u_{f_1}^{-1}-u_{f_2}^{-1}\|_{L^2(Q)}\|f_2u_{f_2}\|_{\Cc^0}. 
\end{align*}
To bound the last term we note that 
\begin{align*}
\Big|\frac{1}{u_{f_1}}-\frac{1}{u_{f_2}}\Big|
\leq \Big|\frac{u_{f_1}-u_{f_2}}{\min\{u_{f_1}^2,u_{f_2}^2\}}\Big|
\leq (u_{0,\min}+g_{\min})^{-2}e^{2C_\textbf{T}\|f_1\vee f_2\|_\infty}|u_{f_1}-u_{f_2}|.
\end{align*}
Combining the above with \eqref{eq:SupEstimate} we get
\begin{align*}
\|f_1-f_2\|_{L^2}
& \leq (u_{0,\min}+g_{\min})^{-1} e^{C_\textbf{T}\|f_1\|_\infty}\|u_{f_1}-u_{f_2}\|_{H^{2,1}} \\
& + \|f_2\|_\infty(\|u_0\|_\infty+\|g\|_\infty)
(u_{0,\min}+g_{\min})^{-2}e^{C_\textbf{T}\|f_1\vee f_2\|_\infty}\|u_{f_1}-u_{f_2}\|_{L^2}
\end{align*}   
We conclude the proof by noting that $\|f_1-f_2\|_{L^2(Q)}=\textbf{T}\|f_1-f_2\|_{L^2(\Oo)}$.
\end{proof}

\begin{proof}[Proof of Proposition \ref{Prp:NormEstimate}]
We start by noting that since $f\in C^\beta\subset H^\beta$ and the assumption on $g,u_0$ the solution $u_f=G(f)\in H^{2+\beta,1+\beta/2}$, see \cite[Theorem 5.3]{Lions1972b}.     
Denote by 
\begin{align*}
\F_{(C.C.)}^\beta\subset \F^\beta= H^{\beta,\beta/2}(Q)\times H^{3/2+\beta,3/4+\beta/2}(\Sigma)\times H^{1+\beta}(\Oo)
\end{align*}
the subspace of $\F^\beta$ of elements $(h,g,u_0)$ that satisfy the following compatibility conditions $(C.C.)$; There exists $\psi\in H^{2+\beta,1+\beta/2}(Q)$ such that 
\begin{align*}
\text{$\psi=g$ on $\Sigma$,} &  \text{$\quad \psi(x,0)=u_0$ on $\Oo$ and}\\
\quad \partial_t^k ((\partial_t-\frac{1}{2}\Delta_x+ f) & \psi)|_{t=0} =\partial_t^k h(x,0) \text{\ \ for $0\leq k < \frac{\beta}{2}-\frac{1}{2}$}.
\end{align*}
Using the isomorphism \cite[Theorem 6.2]{Lions1972b}
\begin{align*}
\Big(\partial_t-\frac{1}{2}\Delta_x,\text{tr}_{|\Sigma},\text{tr}_{|\Oo}\Big)
& : H^{2+\beta,1+\beta/2}\to \F_{(C.R)}^\beta\\
u & \mapsto \Big(\partial_tu-\frac{1}{2}\Delta_xu,\text{tr}_{|\Sigma}(u),u(\cdot,0)\Big),
\end{align*}  
inequality \eqref{eq:SobolevBound2}, and the interpolation inequality \eqref{eq:Interpolation} we get 
\begin{align*}
\|u_f\|_{H^{2+\beta, 1+\beta/2}}
& \lesssim \|(\partial_t-\frac{1}{2}\Delta_x)u_f\|_{H^{\beta,\beta/2}}
+\|tr_{|_\Sigma}(u_f)\|_{H^{3/2+\beta,3/4+\beta/2}}
+\|u(\cdot,0)\|_{H^{1+\beta}}\\
& \lesssim \|fu_f\|_{H^{\beta,\beta/2}}    
+\|g\|_{C^{2+\beta,1+\beta/2}}
+\|u_0\|_{C^{2+\beta}}\\
& \lesssim 1+  \|f\|_{\Cc^{\beta}}\|u_f\|_{H^{\beta,\beta/2}}\\
& \lesssim 1+  \|f\|_{\Cc^{\beta}}
\|u_f\|_{H^{2+\beta,1+\beta/2}}^{\frac{\beta}{2+\beta}}\|u_f\|_{L^2}^\frac{2}{2+\beta}. 
\end{align*}
If $\|u_f\|_{H^{2+\beta,1+\beta/2}}\geq1$ we can divide both sides by $\|u_f\|_{H^{2+\beta,1+\beta/2}}^{\frac{\beta}{2+\beta}}$ and otherwise estimate the norm on the right hand side by $1$. Using the second part of Proposition \ref{Prop:ForwardEstimate} we then see that 
\begin{align*}
\|u_f\|_{H^{2+\beta, 1+\beta/2}} 
\lesssim 1+  \|f\|_{\Cc^{\beta}}^{1+\beta/2}\|u_f\|_{L^2}
\lesssim 1+  \|f\|_{\Cc^{\beta}}^{1+\beta/2}\|u_f\|_{\infty}
\lesssim 1+  \|f\|_{\Cc^{\beta}}^{1+\beta/2}.
\end{align*}
\end{proof}

\textbf{Acknowledgments.} The author would like to thank Richard Nickl for valuable discussions.

\bibliographystyle{siam}
\bibliography{Inverse_problems_references} 

\end{document}